\documentclass[11pt,english,reqno]{amsart}
\usepackage[T1]{fontenc}
\usepackage[latin1]{inputenc}
\usepackage{amsbsy,amsthm,amsmath,mathrsfs}
\usepackage{amstext}
\usepackage{amssymb}

\theoremstyle{plain} 
\newtheorem{theorem}{Theorem}[section]
\newtheorem{thm}[theorem]{Theorem}
\newtheorem{lemma}[theorem]{Lemma}

\newtheorem{proposition}[theorem]{Proposition}  

\newtheorem{corollary}[theorem]{Corollary} 
 
%[theorem]
\theoremstyle{definition}
\newtheorem{example}[theorem]{Example}

\newtheorem{remark}[theorem]{Remark}

\newtheorem{definition}[theorem]{Definition}

\newcommand{\RR}{\mathrel{\mathscr{R}}}
\newcommand{\LL}{\mathrel{\mathscr{L}}}
\newcommand{\HH}{\mathrel{\mathscr{H}}}
\newcommand{\DD}{\mathrel{\mathscr{D}}}
\newcommand{\JJ}{\mathrel{\mathscr{J}}}

\newcommand{\up}[1]{\textup{#1}}

\makeatletter

%%%%%%%%%%%%%%%%%%%%%%%%%%%%%% LyX specific LaTeX commands.
%% Because html converters don't know tabularnewline
\providecommand{\tabularnewline}{\\}

%%%%%%%%%%%%%%%%%%%%%%%%%%%%%% User specified LaTeX commands.

%\setlength{\parskip}{0.5cm}
%\title{The Semigroup of a Word}
%\author{Peter M. Higgins \& Norman R. Reilly}
%\date{}
%\def\qed{\quad\vrule height4.17pt width4.17pt depth0pt}

\makeatother

\usepackage{babel}
\begin{document}

\title{\textbf{Algebras defined by equations }}

\author{Peter M. Higgins}
\address{Department of Mathematical Sciences, University of Essex, UK}
\email{peteh@essex.ac.uk}
\author{Marcel Jackson}
\address{Department of Mathematics and Statistics, La Trobe University, VIC 3086, Australia}
\email{m.g.jackson@latrobe.edu.au}
\thanks{The second author was partially supported by ARC Future Fellowship FT120100666}
\keywords{Equational classes; regular semigroups; model theory}
\maketitle
\begin{abstract}
We show that an elementary class of algebras is closed under the taking of homomorphic
images and direct products if and only if the class consists of all
algebras that satisfy a set of (generally simultaneous) equations.
For classes of regular semigroups this allows an interpretation
of a universal algebraic nature that is formulated entirely in terms
of the associative binary operation of the semigroup, which serves
as an alternative to the approach via so called e-varieties. In particular
we prove that classes of Inverse semigroups, Orthodox semigroups,
and $E$-solid semigroups are equational in our sense. We also examine which equations are valid in every semigroup. 
\end{abstract}
\section{Introduction}\label{sec:intro}

Groups may be characterized in terms of their binary operation alone
as they form the class of semigroups that are both left and right
simple, which is to say that a semigroup $S$ is a group if and only
if $aS=Sa=S$ for all $a\in S$. Since this introduces the theme of
the paper,  let us observe that the given pair of conditions on $S$
may be expressed by saying that the equations $ax=b$ and $ya=b$
are always solvable in $S$, meaning that the class $\mathcal{G}$ of
all groups is defined within the class of semigroups by the equations:
\begin{equation}
\text{\ensuremath{\mathcal{G}}\ensuremath{:\,}}(\forall a,b\in S)\,(\exists x,y\in S)\colon (ax=b)\wedge(ya=b).\label{eqn:group}
\end{equation}
A second observation is that $\mathcal{G}$ is a class of semigroups
closed under the operations $\mathsf{H}$ and $\mathsf{P}$, which are respectively
the taking of homomorphic images, and the taking of direct products,
but $\mathcal{G}$ is not closed under the taking of subsemigroups, so
that $\mathcal{G}$ does not constitute a semigroup variety.  Many fundamental
semigroup classes are $\{\mathsf{H},\mathsf{P}\}$-closed classes in this way and we may easily
identify natural equational bases, as we show in Section \ref{sec:examples}.  
In general we will use the phrase \emph{equation system} in preference to simply \emph{equation}, to allow for the fact that they are typically systems of simultaneous equations (grouped by conjunction) and that we allow arbitrary quantification.  This also avoids confusion with the common use of  ``equation'' synonymously with ``identity'' in the context of varieties. Nevertheless, we allow \emph{equational basis} to refer to any family of equation systems that characterise a class.

In Section \ref{sec:EHP} we give a fundamental model theoretic theorem (originally noted, though not explicitly proved, by Keisler~\cite{kei}) underlying
this idea, which is that an elementary class $\mathcal{C}$ of algebras is $\{\mathsf{H},\mathsf{P}\}$-closed
if and only if $\mathcal{C}$ consists of all algebras for which there
exist solutions to a certain set of equation systems.  The reverse direction
is clear but the forward implication is a consequence of Lyndon's
Positivity Theorem (see \cite{lyn} or Corollary~8.3.5 of \cite{hod}).  In
Section 4 we find equational bases for the $\{\mathsf{H},\mathsf{P}\}$-classes of Inverse
semigroups, Orthodox semigroups, and $E$-solid semigroups (semigroups
whose idempotent generated part is a union of groups). Section 5 compares our approach to that of 
e-varieties when applied to classes of regular semigroups.  The final section is on equation systems 
that are universally solvable in any semigroup. 

General background on semigroup theory will be assumed. We direct
the reader to the books \cite{clipre,hig,how,lal} and our textual source for universal
algebra is \cite{bursan}. Standard location theorems for Green's relations
and properties of regularity will often be used without further comment.
The symbol $S$ stands for a semigroup and we denote the set of idempotents
of $S$ by $E(S)$ or sometimes simply by $E$. We write $V(A)$ to
denote the set of inverses of members of the subset $A$ of $S$.
One fact drawn upon in Section \ref{sec:evar} is that in a regular semigroup $S$,
$V(E^{n})=E^{n+1}$, from which it follows that the \emph{core} of $S$, which is
the  idempotent generated subsemigroup $\langle E\rangle$ of $S$, is itself regular (see Fitzgerald \cite{fitz}).

\section{Examples of equational bases for $\{\mathsf{H},\mathsf{P}\}$-classes of semigroups}\label{sec:examples}
In the following, $S$ always denotes a semigroup, and unless otherwise stated, quantification is over elements of $S$.
\begin{example}
\begin{enumerate}
\item[(i)] $\mathcal{R}eg$, the class of all \emph{regular semigroups} may
be defined by the single equation $a=axa$, which is to say 
\begin{equation}
\text{\ensuremath{\mathcal{R}eg}:}\ (\forall a)\,(\exists x)\colon  axa=a.
\end{equation}
\end{enumerate}

The classes of Completely regular semigroups, Semilattices of groups, and of Completely simple semigroups may each be defined within the class of regular semigroups by one additional equation.

\begin{enumerate}

\item[(ii)] $\mathcal{CR}$, the class of all \emph{completely regular semigroup}s
(unions of groups) has an equational basis in our sense given by:
\begin{equation}
\text{\ensuremath{\mathcal{CR}:}}\ (\forall a)\,(\exists x)\colon (a=axa)\wedge(ax=xa)\label{eq:CR}
\end{equation}
for if $S\in\mathcal{CR}$ then for any $a\in S$ we take $x$ as the
group inverse of $a$ in order to satisfy the equation system. Conversely, given that $S$ satisfies
this equation system we have that $y=xax\in V(a)$ and $ay = axax = ax = xa = xaxa = ya$.  But then we have $a\HH y$ as $a=a^{2}y=ya^{2}$
and $y=y^{2}a=ay^{2},$ so that $H_{a}$ is a group and therefore $S$ is a union of groups.

\item[(iii)] The class $\mathcal{SL}$ of all \emph{semilattices
of groups} may be defined by: 

\begin{equation}
\mathcal{SL}\colon (\forall a,b)\,(\exists x,y)\colon (a=axa)\wedge (ab=bya).
\end{equation}
To see this, given that $S\in\mathcal{SL}$ then the first equation is
satisfied by regularity. Now $H_{ab}=H_{ba}=H$, a group with identity
$e$ say. Hence $be,ea\in H$ so we put $y=e(be)^{-1}ab(ea)^{-1}e$
(where inversion is in the group $H$) and then 
\[
bya=be(be)^{-1}ab(ea)^{-1}ea=e(ab)e=ab.
\]
Conversely if $S$ satisfies the given equations then $S$ is certainly
regular.  Take $a\in S,\,e\in E(S)$. Then there exists $y\in S$ such
that $ae=eya$, whence $eae=e^{2}ya=eya=ae$. Similarly there exists
$z\in S$ such that $ea=aze$, whence $eae=aze^{2}=aze=ea$. Therefore
$ae=eae=ea$, which shows that idempotents are central and we conclude
that $S$ is a semilattice of groups.  

We note also that Theorems 5.1 and  5.2 of~\cite{nag}  show that the second equation of (4) taken over $S^1$  characterises  semigroups in which $\mathscr{H}$  is a congruence such that $S/\mathscr{H}$  is commutative. 
%The class $\mathcal{SL}$ of all \emph{semilattices of groups }may
%be defined by augmenting the equation system \eqref{eq:CR} for $\mathcal{CR}$ as follows:
%\begin{multline}
%\text{\ensuremath{\mathcal{SL}}:\,}(\forall a,b)\,(\exists x):\\
%(a=axa)\wedge(x=xax)\wedge(ax=xa)\wedge(axb=bax)\label{eq:4}
%\end{multline}
%Given that $S$ is a semilattice of groups then $S$ certainly satisfies
%the $\mathcal{CR}$ equation system. Let $x$ be the (unique) inverse of $a$.
%Since $ax$ is idempotent and idempotents are central in $S$, it
%follows that $ax$ commutes with every $b\in S$ and so $x$ simultaneously
%satisfies all the equations in \eqref{eq:4}. Conversely, suppose that $S$
%satisfies the equation system \eqref{eq:4}. Certainly $S$ is then a union of groups.
%Take any idempotent $a=e\in E(S)$, and let $b\in S$ be arbitrary.
%Then for the given solution $x$ we have that $x$ is the group inverse
%of $e$ so that $e=x=ex$. Hence from the final equation of \eqref{eq:4} we
%get $eb=exb=bex=be$, whence we see that idempotents are central in
%$S$. Therefore $S$ is a semilattice of groups. 

\item[(iv)] The class $\mathcal{CS}$ of \emph{completely simple semigroups}
is defined by: 
\begin{equation}
\mathcal{CS}\colon (\forall a,b)\,(\exists x,y)\colon (a=axa)\wedge(a=abay).\label{eq:CS}
\end{equation}
For if $S$ is a completely simple semigroup then $S$ is certainly regular and since
for any $a,b\in S$ we have $a\HH aba$, it follows that there are always solutions
to the second equality in \eqref{eq:CS} as well. 
Conversely, given that $S$ satisfes
the equation system we see that  $S$ is regular while
the second equality in~\eqref{eq:CS} implies that $a\leq_{\JJ}b$ is true
for all $a,b\in S$ and so $S$ is simple.  Suppose that $e\leq f$  holds in the natural 
partial order of the idempotents of $S$ so that $e = ef = fe = fef$.  Taking $a = f$  and $b = e$ in 
the second equation gives us  $f = fefy = ey$, and so $e = ef = e^2y = ey = f$.  
It follows that each idempotent $f \in E(S)$  is primitive and therefore $S$ is indeed 
completely simple. 
\end{enumerate}
\end{example}
We adopt the convention in our equations that letters taken from the front of the alphabet, $a,b,c$
are \emph{parameters}, which means they are quantified by a $\forall$
symbol, while $x,y,z$ denote \emph{variables}, meaning that they
are quantified by the symbol $\exists$. 

 We may sometimes abbreviate certain collections of equalities by expressions
that are shorter and the meaning of which is clearer. For example, $x \in V(a)$ 
is equivalent to $(a = axa)\wedge (x = xax)$. However, if
the equalities required are simultaneous, meaning that they contain
common variables, these abbreviations may not suffice and the equations
may need to be listed explicitly to convey the required duplication
of variables between equations. However the equations $((\exists x)\,(\forall a):\,ax=xa=a)$
may be shortened to $x=1$ and similarly $((\exists x)\,(\forall a):\,ax=xa=x)$
can be written as $x=0$. When dealing with long strings it is sometimes
convenient to write the equation $w=w^{2}$ as $w\in E$, although
this is an abuse of notation as $w$ is a word in a free semigroup
pre-image of $S$ while $E=E(S)$. Another example we shall make use of is to write $w \in G$  to indicate that $w$ belongs to a subgroup of~$S$.  The pair of equations $(\exists x):(x\in V(w)) \wedge (wx = xw)$ is equivalent to the pair of conditions that  $x$  is an inverse of $w$  and $H_w = H_x$  is a group $\HH$-class.  In this way the class $\mathcal {CR}$ of completely regular semigroups may now simply be expressed by $(\forall a) (a \in G)$. 

\begin{example}
\begin{enumerate}
\item[(i)] Let $\mathcal{IG}$ be the class of all semigroups $S$ for which
each element has an inverse that lies in some subgroup of $S$: $\mathcal{IG}$ may be captured as
the conjunction of equational properties as described above:
\[
\mathcal{IG}:\,(\forall a)\,(\exists x,y)\colon (x\in V(a))\wedge(x\HH y)\wedge(y\in E(S)).
\]
We note that expressions such as $x \HH  y$  may be expressed by equations but that in general these are equations over $S^{1}$ rather than $S$.  In the sequel it will be seen that, on some occasions, this is a significant distinction although that is not so in the presence of regularity (as we have in this example) and so the relevant equations can indeed be taken over $S$. 

\item[(ii)] By the class $\mathcal{C}r$ of \emph{cryptogroups} is meant those
semigroups $S$ that are completely regular and for which $\HH$
is a congruence (so that $\mathcal{CS}\subseteq\mathcal{C}r$). The class
$\mathcal{C}r$ is defined by the $\mathcal{CR}$ equations \eqref{eq:CR} together
with the equation systems defined by $ab\HH axb$ and $ba\HH bxa$:
\begin{multline}
\mathcal{C}r:\,(\forall a,b)\,(\exists x)\colon \\
(a = axa)\wedge(ax=xa)\wedge(ab\HH axb)\wedge(ba\HH bxa).\label{eq:6}
\end{multline}
For supposing that $S$ is a cryptogroup then $S$ is completely regular
and since $\HH$ is a congruence and $a\HH ax$ (as $ax=xa$)
it follows that the additional equations are also satisfied. Conversely
if $S$ satisfies our equations then $S$ is certainly a union of
groups. Suppose that $a\HH c$ in $S$. Then $e=ax=xa$ is the
idempotent in the class $H_{a}=H_{c}$. By the same token there is a solution
$y$ say to the given equations so that $c=cyc$ and $cy = yc=e$.
It then follows from the third equation in (6) 
that for any $b\in S$ we have $ab\HH eb\HH cb$, so that $\HH$ is a right congruence, and by symmetry
we obtain that $\HH$ is also a left congruence, and therefore
$\HH$ is a congruence on $S$, which is to say that $S$ is
a cryptogroup. 
\end{enumerate}
\end{example}

\begin{example}\label{eg:2}
\begin{enumerate}
\item[(i)] \emph{Semigroups with a right identity} (resp.~\emph{right zero})
are defined by the equation 
\begin{equation}
(\exists x)\,(\forall a)\colon ax=a\,(\text{resp.~\ensuremath{ax=x)}}.\label{eqn:rightid}
\end{equation}
We also have of course the left and the two-sided versions of these,
the two-sided cases respectively being the classes of \emph{Monoids}
($\mathcal{M})$, and \emph{Semigroups with zero}. In accord with the
comment above, we may express these respectively via the equations
$x=1$ and $x=0$. We do however explicitly call attention to this
equational basis for $\mathcal{M}:$
\begin{equation}
\text{\ensuremath{\mathcal{M}}:}\ (\exists x)\,(\forall a)\colon ax=xa=a.\label{eq:M}
\end{equation}
The order of the logical quantifiers $\forall$ and $\exists$ in
the equation systems of \eqref{eqn:group} to \eqref{eq:6} is $\forall\dots\exists\dots$ whereas
in \eqref{eqn:rightid} and \eqref{eq:M} the order is reversed. What is more $\mathcal{M}$ cannot
be represented by an equational basis of the form $\forall^{*}\,\exists^{*}$
(meaning any, possibly $0$, number of $\forall$ followed by any,
possibly zero, number of $\exists$) because any class defined in
that way is closed under the taking of unions of ascending chains of algebras (this is the easy half of the Chang-{\L}os-Suszko Preservation Theorem; see \cite[Theorem~5.2.6]{chakei}).
However, this is not true of $\mathcal{M}$ as may be seen by considering
the semilattice represented by the chain $E$  of the integers with the standard ordering.  Observe that $E$  is the union of the ascending sequence of sub-chains  $E_i=\{ e_{-i}< e_{-i+1} < \cdots <  e_{i - 1} < e_{i}\}$  $(i\geq 1)$.  Each $E_i$ is a monoid with zero, with identity element  $e_{i}$  and with zero element  $e_{-i}$  but the union $E$ of these chains is a semilattice with no identity element and no zero.  In particular, $E$  is not a monoid. 

\item[(ii)] The class $\mathcal{J}$ of \emph{simple semigroups} (semigroups
with a single $\JJ$-class) is defined by the
condition that for all $a,b\in S$ there are solutions $x,y\in S^{1}$
to the equation $a=xby$. In those circumstances however, by replacing
$b$ by $aba$ we may find solutions $u,v\in S^{1}$ such that $a=(ua)b(av)$
so that $x=ua$ and $y=av$ furnish solutions $x,y\in S$ that satisfy
our equation $a=xby$. In summary we have the following equational
basis for $\mathcal{J}$: 
\begin{equation}
\text{$\JJ$-simple semigroups:}\ (\forall a,b)\,(\exists x,y)\colon a=xby.
\end{equation}

By a similar argument, the class $\mathcal{R}$ of \emph{right simple semigroups} ($aS^{1} = S$   for all $a \in S$) has equational basis $(\forall a,b)\, (\exists x):\, a = bx$. The dual comment applies to the class  $\mathcal{L}$  of \emph{left simple semigroups}, while the class of $\mathcal{H}$-simple semigroups is of course the class $\mathcal{G}$ of all groups. 
 
\item[(iii)] Semigroups with a maximum  $\JJ$-class.   The two element null semigroup $N$ has a maximum $\JJ$-class, but its square $N\times N$ does not, so the class of semigroups with maximum $\JJ$-class is not $\mathsf{P}$-closed.  It turns out that this example is the main obstacle to being $\{\mathsf{H},\mathsf{P}\}$-closed, as arguments similar to those in part (ii) show that the following properties are equivalent for a semigroup $S$:
\begin{itemize}
\item $S$ has a maximum $\JJ$-class $J$ and the Rees quotient  $S/(S-J)$ is not a null semigroup;
\item $S$ has a maximum $\JJ$-class $J$ and $S/(S-J)$ is \emph{not} isomorphic to $N$;
\item $S$ satisfies equation \eqref{eq:maximum}:
\end{itemize}
\begin{equation}
(\exists y)\,(\forall a)\,(\exists x,z)\colon  (a = xyz). \label{eq:maximum}
\end{equation}

Equation \eqref{eq:maximum} has a sequence of three alternating quantifier types, and we now show that this is necessary.  By the Chang-{\L}os-Suszko Preservation Theorem, it suffices to show that there is a subsemigroup chain $A_1\leq A_2\leq \dots$ such that each $A_i$ satisfies Equation~\eqref{eq:maximum} and such that  $\bigcup_{j\geq 1}A_j$ fails Equation~\eqref{eq:maximum}, and a subsemigroup chain  $B_1\leq B_2\leq\dots$ with each $B_i$  failing Equation \eqref{eq:maximum}, but such that $\bigcup_{j\geq 1}B_j$ satisfies Equation \eqref{eq:maximum}.  For the semigroups $A_i$ we may use the semigroups $E_i$ of Example \ref{eg:2}(i): the union $E$ fails \eqref{eq:maximum}.  For $B_j$, we begin by  considering the denumerably generated combinatorial Brandt semigroup ${\bf B}_\omega$, whose set of elements is $\{0\}\cup\{(i,j)\mid i,j\in \omega=\{0,1,2,\dots\}\}$ with $0$ acting as a multiplicative zero element and with multiplication 
\[
(i,j)(k,\ell)=\begin{cases}
(i,\ell)&\text{ if }j=k\\
0&\text{ otherwise}.
\end{cases}
\]  
For each $i=1,2,3,\dots$, choose $B_i$ to be the subsemigroup of ${\bf B}_\omega$ on the set $\{0\}\cup\{(j,k)\mid 0\leq j,k\leq i-1\}\cup\{(i,i)\}$, with two maximal $\JJ$-classes, which are $\{ (i,i) \}$ and $S \setminus (\{ 0\} \cup \{(i,i)\})$.  Then each $B_i$ fails \eqref{eq:maximum}, yet the union is ${\bf B}_\omega$, which has a single maximum $\JJ$-class, satisfies \eqref{eq:maximum}.

\item[(iv)] A non-example: $\mathcal{D}$ the class of bisimple semigroups.
The distinction between solutions of equations over $S$ and over  $S^{1}$ is important however when it comes to $\DD$,
for it is unique among the five Green's relations in that the class
of $\DD$-simple (\emph{bisimple}) semigroups is closed under
$\mathsf{H}$ but not $\mathsf{P}$. Bisimple semigroups are defined by the following
triple disjunction of equational bases:
\begin{multline}
(\forall a,b)\,(\exists t,u,v,x,y)\colon\\
\big((a=tu)\wedge(t=av)\wedge (t=xb) \wedge (b=yt)\big)\vee
\big((a=xb)\wedge(b=ya)\big)\\\vee\big((a=bx)\wedge(b=ay)\big).\label{eq:bisimple}
\end{multline}
For suppose that $S$ satisfies \eqref{eq:bisimple} and let $a,b\in S$. If the first
equation system in \eqref{eq:bisimple} applies to $a$ and $b$ then $a\RR t\LL b$,
while the second and third sets imply that $a\LL b$ and $a\RR b$
respectively. In any event it follows that $S$ is bisimple. Conversely
let $S$ be any bisimple semigroup and let $a,b\in S$.  Then there
exists $t\in S$ such that $a\RR t\LL b$ and so we may satisfy
the first equation system in \eqref{eq:bisimple} for $a$ and $b$ unless $a=t$ or
$t=b$. If we have that $a=t \neq b$, then $a\LL b$ and the second
equation system in \eqref{eq:bisimple} is solvable. Dually, if $b=t \neq a$ then $a\RR b$
and the third equation system in \eqref{eq:bisimple} can be solved for $a$ and $b$. Finally consider the case where $a = b$.  Clearly we may assume that $|S|\geq 2$  in which case either $|R_{a}|\geq 2$  or $|L_a| \geq 2$.  By symmetry, we need deal only with the first case from which it follows that $\exists x \in S$  such that $ax = a$ whence the third equation system in (11) is then satisfied.  Therefore if $S$ is bisimple then $S$ satisfies \eqref{eq:bisimple}. It follows that
the class of bisimple semigroups is closed under the taking of homomorphic
images but, as we now show, not under the taking of direct products. For this reason, it is not possible to remove the conjunctions in (11). (However, the first equation system of (11) does suffice if we allow ourselves solutions over $S^1$ rather than just $S$.)

Let $X$ be a countable infinite set. The \emph{Baer-Levi semigroup
$B$ }is the subsemigroup of the full transformation semigroup $T_{X}$
consisting of all one-to-one mappings $\alpha:X\rightarrow X$ such
that $|X\setminus X\alpha|$ is infinite. It is well known and easily
verified that $B$ is $\RR$-simple, right cancellative, and
idempotent free; in consequence $B$ is $\LL$-trivial. In particular
it follows that there are no factorizations of the form $a=ta$ in
$B$ (as then $ta=t^{2}a$ whence $t=t^{2}$ by right cancellativity)
or what is the same, $a\not\in Ba$ for all $a\in B$. Therefore $B$
is an example of a bisimple semigroup that satisfies the third equation
set in \eqref{eq:bisimple} but not the other two sets. Its left-right dual, $B^{*}$,
will by symmetry also be bisimple and satisfy the second equation
set in \eqref{eq:bisimple} but not the other two. (As another example of a semigroup
that is left simple, left cancellative, idempotent free and hence
$\RR$-trivial, take the semigroup $S$ of all surjections on
$X$ for which every kernel class is infinite.) The semigroup $B\times B^{*}$
is then an example of a direct product of two $\DD$-simple semigroups
that is not itself $\DD$-simple: indeed $B\times B^{*}$ is
$\DD$-trivial (but $\JJ$-simple), by virtue of the following
observation.

\begin{proposition} Let $S$ \up(resp.~$T$\up) be a semigroup that
satisfies the condition that for all $a\in S$, $a\not\in Sa$ \up(resp.\
for all $b\in T,$ $b\not\in bT$\up). Then $S\times T$ is $\DD$-trivial. 
\end{proposition}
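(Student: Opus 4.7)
The plan is to deduce $\DD$-triviality of $S\times T$ from the stronger fact that both $\LL$ and $\RR$ are already trivial on $S\times T$; since $\Dd=\Ll\circ\Rr$, this closes the argument at once. The essential observation is that although neither hypothesis is itself two-sided, the identity adjoined to form $(S\times T)^1$ acts as identity on \emph{both} coordinates simultaneously, and there is no way to multiply by an identity on only one side. Consequently, the two one-sided absorption conditions on $S$ and $T$ pool into a single obstruction against any non-trivial Green's witness in the product.

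For $\RR$-triviality I would suppose $(a_1,b_1)\RR(a_2,b_2)$ in $S\times T$ and unpack this as witnesses $(x,y),(x',y')\in (S\times T)^1$ satisfying $(a_1,b_1)(x,y)=(a_2,b_2)$ and $(a_2,b_2)(x',y')=(a_1,b_1)$. A brief case analysis handles every possibility immediately: if either witness equals the adjoined identity of $(S\times T)^1$ then the two pairs coincide, so it remains only to consider the case when both witnesses lie in $S\times T$. In that case the second-coordinate equations $b_1 y=b_2$ and $b_2 y'=b_1$ combine to give $b_1=b_1\cdot yy' \in b_1T$, contradicting the hypothesis on $T$. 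The fully symmetric argument, using the first-coordinate equations together with the hypothesis on $S$, yields $\LL$-triviality.

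I anticipate no serious obstacle here; the only subtlety is recognising the distinction between $(S\times T)^1$ and $S^1\times T^1$, which once pointed out reduces the proof to a short book-keeping exercise. Note that the hypotheses actually force each of $S$ and $T$ to lack an identity element (any identity $e$ of $S$ would satisfy $e=e\cdot e\in Se$, contradicting $a\notin Sa$), so $(S\times T)^1$ genuinely requires a freshly adjoined identity and the case analysis above is therefore exhaustive.
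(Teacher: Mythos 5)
Your proof is correct and follows essentially the same route as the paper: establish $\RR$-triviality from the hypothesis on $T$ (via the second-coordinate equations producing $b\in bT$), obtain $\LL$-triviality symmetrically from the hypothesis on $S$, and conclude $\DD$-triviality. The remarks about $(S\times T)^1$ versus $S^1\times T^1$ are a harmless elaboration of the case split the paper handles by simply allowing $(a,b)=(c,d)$.
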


\begin{proof} We first check that $S\times T$ is right trivial.
Suppose that $(a,b)\RR (c,d)$ say. Then either $(a,b)=(c,d)$
or there exists $(x,y),(u,v)\in S\times T$ such that $(a,b)(x,y)=(ax,by)=(c,d)$
and $(c,d)(u,v)=(cu,dv)=(axu,byv)=(a,b)$. But then we have $t=yv\in T$
and $b=bt$, contradicting that $b\not\in bT$. Therefore it follows
that $S\times T$ is right trivial. By symmetry it follows that $S\times T$
is also left trivial and hence $S\times T$ is $\DD$-trivial.
\end{proof}

\item[(v)] Another non-example, this time in the signature of rings with identity, $\{+,-,\cdot,0,1\}$.  Let $F$ be any uncountable field such as the reals or complex numbers.  The closure of $\{F\}$ under taking $\mathsf{H}$ and $\mathsf{P}$ is equal to $\mathsf{HP}(F)$ by the well known class operator inequality $\mathsf{HP}\geq \mathsf{PH}$ (see \cite[Lemma II.9.2]{bursan} for example).  However it cannot be characterised by any set of equation systems, nor indeed by any set of first order sentences, as such a class that is closed under $\mathsf{HP}$ is then an elementary class.  However, we will show that all nontrivial members of $\mathsf{HP}(F)$ have cardinality at least equal to $|F|>\aleph_0$, whereas the Downward Lowenheim-Skolem-Tarski Theorem (see \cite[Corollary 2.1.4]{chakei} for example) shows that, in a countable signature, any elementary class with an infinite model has a model of denumerable cardinality.  It follows that $\mathsf{HP}(F)$ cannot be closed under taking elementary embeddings.

Now there is a natural embedding of $F$ into any nonempty power of itself $F^S$, namely that which assigns each $f\in F$ to the corresponding constant mapping $\underline{f}:s\mapsto f$ in $F^S$.  Let $\underline{F}$ be the image of $F$ under this embedding.  We claim that if two distinct elements of $\underline{F}$ are in the kernel of a homomorphism $\phi$ from $F^S$, then $\phi$ is a trivial homomorphism collapsing all of $F^S$.  To this end, assume $f\neq g$ in $F$ and 
$\phi(\underline{f})=\phi(\underline{g})$.  There is no loss of generality in assuming that $f$ is $1$ and $g$ is $0$, as we may otherwise replace $f$ by $(f-g)^{-1}(f-g)$ and $g$ by $0$.  Then for any $x\in F^S$ we have $\phi(x)=\phi(\underline{1}x)=\phi(\underline{1})\phi(x)=\phi(\underline{0})\phi(x)=\phi(\underline{0}x)=\phi(\underline{0})$ as required.  This completes the argument, as either $\aleph_0<|F|\leq |\phi(F^S)|$ or $|\phi(F^S)|=1$.
\end{enumerate}
\end{example}

\begin{enumerate}

\item[(vi)] Let $\mathcal{RG}$ denote the class of \emph{right groups}, by
which we mean semigroups that are right simple ($aS=S)$ and left
cancellative $((ab=ac)\rightarrow(b=c))$. Another characterization
of $\mathcal{RG}$ is the class of semigroups for which there is always
a \emph{unique} solution to the equation $ax=b$ $(a,b\in S)$. The
solvability of the equation $ax=b$ however does not in itself imply
uniqueness: the Baer-Levi semigroup is an example of a right simple,
right cancellative semigroup in which the equation $ax=b$ always
has infinitely many solutions. However right groups are also characterized
as those semigroups that are right simple and contain an idempotent
(see \cite{clipre} for details) and as such the class is determined by the
availability of solutions to a pair of equations:
\begin{equation}
\text{\ensuremath{\mathcal{RG}}\ensuremath{:}\,}(\forall a,b)\,(\exists x,y)\colon (ax=b)\wedge (y=y^{2}).
\end{equation}

\item[(vii)] Any \emph{variety} $\mathcal{V}$ of semigroups (a class closed under
the operators $\mathsf{H}$, $\mathsf{P}$, and $\mathsf{S}$, the taking of subalgebras) is,
by Birkhoff's theorem, defined by some countable set of identities,
which are equations that may be expressed without the use of the existential
symbol $\exists$.  The following easy proposition is indicative of the kind of result that our approach leads to: it shows for example that the equation systems holding in a variety $\mathcal{V}$ are precisely those holding on the denumerably generated $\mathcal{V}$-free algebra (precisely as is the case for identities).
\end{enumerate}
\begin{proposition}
Let $\mathcal{K}$ be a class of algebras in a countable signature that is defined by a family of equation systems.  Then $\mathcal{K}$ is a variety if and only if~$\mathcal{K}$ contains the denumerably generated $\mathsf{HSP}(\mathcal{K})$-free algebras.
\end{proposition}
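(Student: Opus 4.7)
The plan is to prove both implications, with the substance lying in the converse. The forward direction is immediate: if $\mathcal{K}$ is a variety then $\mathcal{K}=\mathsf{HSP}(\mathcal{K})$, which of course contains all $\mathsf{HSP}(\mathcal{K})$-free algebras.

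For the converse, set $\mathcal{V}:=\mathsf{HSP}(\mathcal{K})$ and let $F$ denote its denumerably generated free algebra, assumed to lie in $\mathcal{K}$. I would first invoke the theorem of Section~\ref{sec:EHP} to record that a class defined by a family of equation systems is closed under $\mathsf{H}$ and $\mathsf{P}$; since the inclusion $\mathcal{K}\subseteq\mathcal{V}$ is automatic, it suffices to establish $\mathcal{V}\subseteq\mathcal{K}$, and then $\mathcal{K}=\mathcal{V}$ is a variety.

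My approach is a two-stage reduction. Stage one: every at-most-denumerable $A\in\mathcal{V}$ belongs to $\mathcal{K}$. Choosing an enumeration of $A$ and sending the free generators of $F$ onto its terms, the universal property of $F$ in $\mathcal{V}$ yields a surjective homomorphism $F\twoheadrightarrow A$; since $F\in\mathcal{K}$ and $\mathcal{K}$ is $\mathsf{H}$-closed, this gives $A\in\mathcal{K}$. Stage two: extend to arbitrary $A\in\mathcal{V}$ by a Downward L\"owenheim--Skolem reflection. If some $A\in\mathcal{V}\setminus\mathcal{K}$ existed, then an equation system $\sigma$ from the defining family would fail on $A$, so $A\models\neg\sigma$. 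In a countable signature, Downward L\"owenheim--Skolem supplies a countable elementary substructure $A'\prec A$, which therefore also satisfies $\neg\sigma$. But $A'$ is a subalgebra of $A$, so $A'\in\mathsf{S}(\mathcal{V})=\mathcal{V}$; countability plus stage one force $A'\in\mathcal{K}$, contradicting $A'\not\models\sigma$.

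No step is genuinely difficult; the main conceptual point is to exploit the first-order axiomatisability provided by equation systems to reduce to the countable case, which can then be matched against the universal mapping property of the countably generated free algebra. The only non-trivial ingredients invoked are the Section~\ref{sec:EHP} theorem (yielding $\{\mathsf{H},\mathsf{P}\}$-closure of $\mathcal{K}$) and the Downward L\"owenheim--Skolem theorem in a countable signature.
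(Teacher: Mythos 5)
Your proof is correct and follows essentially the same route as the paper: use the surjection from the denumerably generated free algebra plus $\mathsf{H}$-closure to capture all countable members of $\mathsf{HSP}(\mathcal{K})$, then use first-order axiomatisability to reduce the general case to the countable one. Your explicit Downward L\"owenheim--Skolem reflection simply unpacks the paper's remark that an elementary class in a countable signature is determined by its countably generated members.
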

\begin{proof}
The forward direction is trivial.  Now assume that the denumerably generated $\mathsf{HSP}(\mathcal{K})$-free algebras lie in $\mathcal{K}$.  Then as $\mathsf{H}(\mathcal{K})=\mathcal{K}$ it follows that $\mathcal{K}$ contains all countably generated members of $\mathsf{HSP}(\mathcal{K})$.  However as $\mathcal{K}$ is defined by a family of equation systems it is an elementary class (definable in first order logic) and hence is determined by its countably generated members.  As these coincide with  $\mathsf{HSP}(\mathcal{K})$ it follows that $\mathsf{HSP}(\mathcal{K})=\mathcal{K}$.
\end{proof}
\begin{enumerate}
\item[(viii)] The dual idea to that which arises in (vii) is of a class defined
by an equation system that is free of the symbol $\forall$. For example
the class: 
\begin{equation}
\text{\ensuremath{\mathcal{I}d}\,:}\ (\exists\,x)\colon x=x^{2}.
\end{equation}
is the class of all semigroups $S$ with idempotents, which is to
say that $E(S)\neq\emptyset$. We note that $\mathcal{I}d$ is the minimum
semigroup class of this kind as any semigroup $S$ with an idempotent
$e$ satisfies every equation $p=q$ that is free of the $\forall$
quantifier as is seen by acting the substitution $x\rightarrow e$
on each variable $x$ of $p=q$. We return to this topic in Section~\ref{sec:universal}.
\end{enumerate}

\section{The equational representation theorem for $\{\mathsf{H},\mathsf{P}\}$-classes}\label{sec:EHP}
A formula of the predicate calculus is in \emph{prenex form} if it
is written as a string of quantifiers (referred to as the \emph{prefix})
followed by a quantifier-free part (referred to as the \emph{matrix}).
An \emph{equation system}, as informally described in Section  \ref{sec:examples}, is a sentence in prenex form, whose matrix
is a conjunction of atomic formulas. Familiar examples include identities
(universally quantified equation systems) and primitive positive sentences
(existentionally quantified equation systems). When the quantifiers
are all universal, we also refer to a $\forall_{1}$ equation system,
while a primitive positive sentence will be referred to as a $\exists_{1}$
equation system. In general, we let a $\forall_{i+1}$ equation system denote an equation system in which the leftmost quantifier is $\forall$, and there are $i$ alternations of quantifiers; the definition of an $\exists_{i+1}$ system is dual. 

We note that in the case of $\forall_{1}$ equation systems, we may
use the property 
\[
\big((\forall x)\,\phi(x)\wedge\psi(x)\big)\leftrightarrow\big((\forall x)\, \phi(x))\wedge\big((\forall x)\,\psi(x)\big)
\]
in order to remove conjunctions (in favour of sets of quantified atomic
formulas), however this is not in general true once existentially
quantified variables are present. Equation systems are exactly the
\emph{positive} (that is, negation-free) \emph{Horn sentences} (sentences
with at most one \emph{positive literal} or \emph{atom}).

The following theorem is an extension of Lyndon's Positivity Theorem
(see~\cite{lyn}, or \cite[Corollary~8.3.5]{hod}), and applies in all signatures,
including those involving relations.  In the case of relations, by
a  \emph{homomorphic image} of a structure $A$, we mean any structure $B$ for which there is a homomorphism from $A$ onto $B$;  the homomorphism does not necessarily map the relations on $A$  onto those of $B$.  
The result is in the style of the many classical preservation theorems of model theory, though does not appear to have been included in standard references such as \cite{chakei} and \cite{hod}, nor in other surveys such as \cite{ros}.  In revision of this manuscript however, the authors have discovered that the result is noted in Keisler \cite{kei} (see un-numbered remark on page 322).  A proof idea is alluded to there (specifically, relating to the proof of Theorem~2 of Bing \cite{bin}), but we feel that the reader will appreciate the transparent inductive argument presented here in full. 
 
\begin{theorem}\label{thm:equationsystem} An elementary class equals the class of models
of some family of equation systems if and only if it is closed under
taking homomorphic images of direct products. If the elementary class
is the model class of a single sentence, then it is a class of models
of a single equation system. 
\end{theorem}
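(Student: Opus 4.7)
The forward direction is routine: every equation system is both a positive sentence and a Horn sentence (a conjunction of atomic formulas under a quantifier prefix), hence preserved under homomorphic images (by Lyndon's Positivity Theorem) and under direct products (by the Horn preservation theorem). Consequently the model class of any family of equation systems is automatically closed under $\mathsf{H}$ and $\mathsf{P}$.

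For the backward direction, suppose $\mathcal{C}=\mathrm{Mod}(T)$ is an elementary class closed under $\mathsf{H}$ and $\mathsf{P}$. My plan is to show that every $\phi\in T$ is equivalent, modulo $T$, to a single equation system; both halves of the theorem then follow at once, as the construction consumes one sentence and returns one equation system. Place $\phi$ in prenex form and proceed by induction on the number of quantifier alternations. The inductive step peels off the outermost quantifier, applies the hypothesis to the parameterised sub-formula, and re-prefixes, so the substantive work lives in the base case.

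In the base case the matrix $M$ of $\phi$ is quantifier free. Because $\mathcal{C}$ is $\mathsf{H}$-closed, $\phi$ is preserved under homomorphisms between models of $T$; the relative form of Lyndon's Positivity Theorem then lets us replace $\phi$, modulo $T$, by an equivalent positive sentence, so that after passing to disjunctive normal form the matrix takes the shape $\bigvee_{i=1}^{k}\alpha_i$ with each $\alpha_i$ a conjunction of atoms. To collapse this disjunction, write $\phi_i$ for the sentence obtained from $\phi$ by replacing its matrix with $\alpha_i$. If $T\not\models\phi_i$ for every $i$, choose $A_i\in\mathcal{C}$ with $A_i\not\models\phi_i$; Skolemising $\neg\phi_i$ in $A_i$ produces a winning strategy against the $i$-th disjunct in $A_i$. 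Combining these strategies coordinatewise in the direct product $\prod_i A_i \in \mathcal{C}$ yields a strategy defeating every disjunct simultaneously, so that $\prod_i A_i\not\models\phi$, contradicting $\mathsf{P}$-closure. Hence some $\phi_{i_0}$ is a consequence of $T$, and the matrix collapses to the required single conjunction of atoms.

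The main obstacle is precisely this product-collapsing step: one must verify that the strategies witnessing $\neg\phi_i$ in each $A_i$ can be combined coordinatewise through an arbitrary interleaving of $\forall$ and $\exists$ blocks in the prefix so as to jointly defeat every disjunct of the matrix across $\prod_i A_i$. Universally quantified variables in $\phi$ (existential in $\neg\phi_i$) are the delicate ones, since their witnesses must be assembled in the product from factor-wise witnesses; existential quantifiers in $\phi$ (universal in $\neg\phi_i$) are handled coordinatewise without further effort. This coordination is exactly where the ``transparent inductive argument'' alluded to by the authors earns its keep, and once it is in place the whole theorem, including the single-sentence statement, follows.
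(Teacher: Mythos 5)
Your overall architecture is the same as the paper's: positivise via Lyndon's theorem, then eliminate the disjunctions in the matrix by taking, for each offending disjunct, a countermodel inside the class, forming the direct product of these countermodels (which stays in the class), and deriving a contradiction from the fact that atomic formulas are preserved by the projection homomorphisms. (Your disjunctive-normal-form, one-product-for-the-whole-matrix packaging is an acceptable variant of the paper's conjunct-by-conjunct CNF argument, since $\phi_{i_0}\vdash\phi$ is a genuine logical entailment.) But there is a real gap: the step you yourself label ``the main obstacle'' --- combining the factor-wise refuting strategies coordinatewise through an arbitrary alternating prefix --- \emph{is} the proof, and you do not carry it out. The paper's argument is exactly this interleaved induction on quantifier blocks: at each universal block of $\phi$ one takes the factors' existential witnesses for $\neg\phi_i$ and assembles them coordinatewise into a tuple of the product; at each existential block one uses $\prod_i A_i\models\phi$ to choose witnesses in the product and projects them back down to instantiate the universal quantifiers of each $\neg\phi_i$; only after this alternation terminates does one obtain a single tuple at which some disjunct holds in the product while failing, after projection, in the corresponding factor. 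Asserting that ``once it is in place the whole theorem follows'' assumes precisely the lemma to be proved. (Also, the delicacy is not concentrated in one quantifier type --- assembling factor witnesses coordinatewise is trivial --- but in the dependency structure created by alternation, which is why the strategies cannot be Skolemised up front and combined in a single shot; they must be built stage by stage against the product's responses.)

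Two further points. Your opening induction ``on the number of quantifier alternations'', peeling the outermost quantifier and applying the hypothesis to the parameterised sub-formula, does not type-check: the hypothesis concerns sentences and $\{\mathsf{H},\mathsf{P}\}$-closed elementary classes, not formulas with free parameters, and in any case your ``base case'' already treats an arbitrary prefix, so this scaffolding does no work --- the paper simply handles the full prenex sentence directly. Secondly, replacing a single $\phi\in T$ by a positive sentence equivalent only \emph{modulo} $T$ need not preserve the model class: relative Lyndon gives $T\models\phi\leftrightarrow\phi^{+}$, and since $\phi\in T$ the direction $\phi^{+}\to\phi$ is useless outside $\mathrm{Mod}(T)$, so $(T\setminus\{\phi\})\cup\{\phi^{+}\}$ could in principle have more models. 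The correct tool is the corollary the paper invokes: an $\mathsf{H}$-closed elementary class is axiomatised by the set of all positive sentences holding throughout it (a single positive sentence in the finitely axiomatised case), after which your disjunct-collapsing replacement is sound and both halves of the statement do follow.
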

\begin{proof}  One direction is easy: equation systems are preserved
under direct products and under homomorphic images. Note also that  equation systems are examples of first order sentences, so such a class is automatically an elementary class and the initial assumption of the theorem is redundant.  We now must show
that if $\mathcal{K}$ is an elementary class closed under taking surjective
homomorphic images and direct products, then it can be axiomatised
by a family of equation systems. Our method of proof will automatically
derive the second statement in the theorem.  As $\mathcal{K}$ is an elementary class closed under taking homomorphic images, a version of Lyndon's Positivity Theorem applies to show that $\mathcal{K}$ is the class of models of a set $\Sigma$ of positive sentences (see \cite[Theorem~3.2.4]{chakei} or \cite[Exercise 8.3.1]{hod}).  It now remains to show that disjunctions can be removed from the
sentences in $\Sigma$.

Consider a sentence $\rho \in \Sigma$; we assume that $\rho$ is a $\forall_{k}$ sentence,
for, if we are given a $\exists_{k}$ sentence, we may augment $\rho$
with the initial condition $(\forall a)$, where $a$ is a symbol
that does not appear elsewhere in $\Sigma$, and so replace $\rho$
with an equivalent $\forall_{k+1}$ sentence. Therefore we may take
it that the quantifier $Q_{t}$ is $\forall$ if $t$ is odd and $\exists$
if $t$ is even $(1\leq t\leq k)$.   We may write our sentence as:
\begin{multline*}
\rho=(\forall x_{1,1}\dots\forall x_{1,n_{1}})(\exists x_{2,1}\dots\exists x_{2,n_{2}})\dots (Q_{k}x_{k,1}\dots Q_{k}\, x_{k,n_{k}})\\
\rho(x_{1,1},\dots x_{1,n_{1}},x_{2,1},\dots,x_{2,n_{2}},\dots,x_{k,1}\dots,x_{k,n_{k}}).
\end{multline*}
Moreover, there is no loss of generality in assuming that $k$ is
even, as we may, if necessary, append a final $(\exists x)$ quantifier,
where the symbol $x$ does not appear elsewhere in  $\Sigma$, giving
an equivalent sentence. We assume that the matrix of $\rho$ is written
as a finite conjunction of disjunctions; say $\bigwedge_{1\leq i\leq m}\gamma_{i}$,
where each $\gamma_{i}$ is a finite disjunction:
\[
\gamma_{i}=\alpha_{i,1}\vee\dots\vee\alpha_{i,r_{i}}
\]
where each $\alpha_{i,j}$ is an atomic formula involving some subset
of the full set of variables $x_{1,1},\dots,x_{k,n_{k}}$. If $r_{i}=1$
for $i=1,\dots,m$ then there is nothing to prove. Otherwise, if
there is $i$ such that $r_{i}\geq2$, we shall show that there is
a $j\in\{1,\dots,r_{i}\}$ such that the conjunct $\gamma_{i}$ may
be replaced by the single atomic formula $\alpha_{i,j}$. Repeating
this for each conjunct will see us arrive at the desired $\vee$-free
sentence. The quantifiers remain unchanged throughout. 

Without loss of generality then, we may assume that $r_{i}\geq2$
for some $i$. For each $j=1,\dots,r_{i}$ let $\rho_{j}$ be the
result of replacing $\gamma_{i}$ by $\alpha_{i,j}$ in $\rho$. Note
that $\rho_{j}\vdash\rho$ so that the class of models satisfied by
$(\Sigma\cup\{\rho_{j}\})\setminus\{\rho\}$ is a subclass of $\mathcal{K}$.
We wish to show that there is some $j$ such that the reverse containment
holds. 

Assume by way of contradiction that no such $j$ exists. In this case,
for each $j\in\{1,\dots,r_{i}\}$ there is a model $\boldsymbol{M}_{j}\in\mathcal{K}$
such that $\rho_{j}$ fails in $\boldsymbol{M}_{j}$. We will now
use the fact that $\boldsymbol{M}:=\Pi_{1\leq j\leq r_{i}}\boldsymbol{M}_{j}\in\mathcal{K}$
and so $\boldsymbol{M}\models\rho$ in order to produce the required
contradiction. 

For each $j$: as $\boldsymbol{M}_{j}\not\models\rho_{j}$ there is
an $n_{1}$-tuple $a_{1,1,j},\dots,a_{1,n_{1},j}$ such that 
\begin{multline*}
\boldsymbol{M}_{j}\not\models(\exists x_{2,1}\dots\exists x_{2,n_{2}})\dots (\exists x_{k,1}\dots \exists x_{k,n_{k}})\\
\rho_{j}(a_{1,1,j},\dots,a_{1,n_{1},j},x_{2,1},\dots,x_{2,n_{2}},\dots,x_{k,1},\dots,x_{k,n_{k}}),
\end{multline*}
where the final block of quantifiers is $\exists$, from our convention that $k$ is even.
Equivalently
\begin{multline}
\boldsymbol{M}_{j}\models(\forall x_{2,1}\dots \forall x_{2,n_{2}})\dots (\forall x_{k,1}\dots \forall x_{k,n_{k}})\\
\neg\rho_{j}(a_{1,1,j},\dots,a_{1,n_{1},j},x_{2,1},\dots,x_{2,n_{2}},\dots,x_{k,1},\dots,x_{k,n_{k}}).\label{eq:negrho}
\end{multline}

Now let $a_{1,1},\dots,a_{1,n_{1}}\in\boldsymbol{M}$ be the $r_{i}$-tuples
formed from these violating tuples from each $\boldsymbol{M}_{j}$,
which is to say that
\begin{equation}
a_{1,l}(j)=a_{1,l,j}\,\,(1\leq l\leq n_{1}).\label{eq:14}
\end{equation}
Now $\boldsymbol{M}\vDash\rho$, and so there exist elements $a_{2,1},\dots,a_{2,n_{2}}\in\boldsymbol{M}$
such that
\begin{multline}
\boldsymbol{M}\vDash(\forall x_{3,1}\dots\forall x_{3,n_{3}})\dots (\exists x_{k,1}\dots \exists x_{k,n_{k}})\\
\rho(a_{1,1},\dots,a_{1,n_{1}},a_{2,1},\dots,a_{2,n_{2}},x_{3,1},\dots,x_{3,n_{3}},\dots,x_{k,1},\dots,x_{k,n_{k}}).
\label{eq:15}
\end{multline}
We continue inductively in this way and assume that for some $t\geq1$,
for each $j\in\{1,\dots,r_{i}\}$ there exists elements 
\[
a_{1,1,j},\dots,a_{1,n_{1},j},\dots,a_{2t-1,1,j},\dots,a_{2t-1,n_{2t-1},j}\in\boldsymbol{M}_{j}
\]
such that
\begin{multline}
\boldsymbol{M}_{j}\vDash(\forall x_{2t,1}\dots\forall x_{2t,n_{2t}})\dots (\exists x_{k,1}\dots \exists x_{k,n_{k}})\\
\neg\rho_{j}(a_{1,1,j},\dots,a_{1,n_{1},j},\dots,a_{2t-1,1,j},\dots,a_{2t-1,n_{2t-1,}j},\\
x_{2t,1},\dots,x_{2t,n_{2t},}\dots,x_{k,1},\dots,x_{k,n_{k}}).\label{eq:manylines}
\end{multline}
And with
\begin{equation}
a_{m,l}(j)=a_{m,l,j}\,\,(1\leq l\leq n_{p})\,(1\leq p \leq2t-1)\label{eq:aml}
\end{equation}
there exist elements $a_{2t,1},\dots,a_{2t,n_{2t}}\in\boldsymbol{M}$
such that with
\begin{multline}
\boldsymbol{M}\vDash(\forall x_{2t+1,1}\dots\forall x_{2t+1,n_{2t+1}})\dots (\exists x_{k,1}\dots \exists x_{k,n_{k}})\\
\rho(a_{1,1},\dots,a_{1,n_{1}},\dots,a_{2t,1},\dots,a_{2t,n_{2t}},x_{2t+1,1},\dots\\\dots,x_{2t+1,n_{2t+1}}\dots,x_{k,1},\dots,x_{k,n_{k}}).\label{eq:twolines}
\end{multline}
The base $t=1$ case of \eqref{eq:manylines}, \eqref{eq:aml}, and \eqref{eq:twolines} is given by \eqref{eq:negrho}, \eqref{eq:14}
and \eqref{eq:15} respectively. We now verify that we may increment each of
the three parts of the inductive hypothesis, they being \eqref{eq:manylines}, \eqref{eq:aml}, and \eqref{eq:twolines}, from $t$ to $t+1$ and thereby continue the induction. 

We use \eqref{eq:twolines} to project from $\boldsymbol{M}$ to each $\boldsymbol{M}_{j}$
by making substitutions in~\eqref{eq:manylines}: 
\begin{equation}
x_{2t,1}\mapsto a_{2t,1,j}=a_{2t,1}(j),\dots,x_{2t,n_{2t}}\mapsto a_{2t,n_{2},j}=a_{2t,n_{2}}(j).\label{eq:19}
\end{equation}
Then from  \eqref{eq:manylines} we have: 
\begin{multline}
\boldsymbol{M}_{j}\vDash(\exists x_{2t+1,1}\dots\exists x_{2t+1,n_{2t+1}})\dots (\exists x_{k,1}\dots \exists x_{k,n_{k}})\\
\neg\rho_{j}(a_{1,1,j},\dots,a_{1,n_{1},j},\dots,a_{2t,1,j},\dots,a_{2t,n_{2t},j},\\
x_{2t+1,1},\dots,x_{2t+1,n_{2t+1},}\dots,x_{k,1},\dots,x_{k,n_{k}}).\label{eq:20}
\end{multline}
Substituting witnesses $x_{2t+1,l,j}\mapsto a_{2t+1,l,j}$ $(1\leq l\leq n_{2t+1})$
in \eqref{eq:20} then increments \eqref{eq:manylines} from $t$ to $t+1$:
\begin{multline}
\boldsymbol{M}_{j}\vDash(\forall x_{2t+2,1}\dots\forall x_{2t+2,n_{2t+2}})\dots (\exists x_{k,1}\dots \exists x_{k,n_{k}})\\
\neg\rho_{j}(a_{1,1,j},\dots,a_{1,n_{1},j},\dots,a_{2t+1,1,j},\dots,a_{2t+1,n_{2t+1},j},\\
x_{2t+2,1},\dots,x_{2t+2,n_{2t+2},}\dots,x_{k,1},\dots,x_{k,n_{k}}).\label{eq:21}
\end{multline}
Next we put $a_{2t+1,l}(j)=a_{2t+1,l,j}\,(1\leq l\leq n_{2t+1})$,
which, together with \eqref{eq:19}, increments \eqref{eq:aml} from $t$ to $t+1$. Finally,
by \eqref{eq:twolines} we may substitute in $\boldsymbol{M}$: 
\[
x_{2t+1,1}\mapsto a_{2t+1,1},\dots,x_{2t+1,n_{2t+1}}\mapsto a_{2t+1,n_{2t+1}},
\]
and call up witnesses: 
\[
x_{2t+2,1}\mapsto a_{2t+2,1},\dots,x_{2t+2,n_{2t+2}}\mapsto a_{2t+2,n_{2t+2}}
\]
such that
\begin{multline}
\boldsymbol{M}\vDash(\forall x_{2t+3,1}\dots\forall x_{2t+3,n_{2t+3}})\dots (\exists x_{k,1}\dots \exists x_{k,n_{k}})\\
\rho(a_{1,1},\dots,a_{1,n_{1}},\dots,a_{2t+2,1},\dots,a_{2t+2,n_{2t+2}},\\
x_{2t+3,1},\dots,x_{2t+3,n_{2t+3}},\dots,x_{k,1},\dots,x_{k,n_{k}})
\end{multline}
which increments \eqref{eq:twolines} from $t$ to $t+1$, and so the induction continues.
This recursive procedure eventually yields a tuple 
\begin{equation}
\bar{a}=(a_{1,1},\dots,a_{1,n_{1}},\dots,a_{k,1},\dots,a_{k,n_{k}})\label{eq:tuple}
\end{equation}
such that $\boldsymbol{M}\vDash\rho(\bar{a})$ but for each $j$,
$(1\leq j\leq r_{i})$, $\boldsymbol{M}_{j}\vDash\neg\rho_{j}(\bar{a}_{j})$,
where $\bar{a}_{j}$ represents the tuple obtained from \eqref{eq:tuple} by projecting
onto the $j$th co-ordinate:
\[
\bar{a}_{j}=(a_{1,1,j},\dots,a_{1,n_{1},j},\dots,a_{k,1,j},\dots,a_{k,n_{k},j}).
\]

Now for all $i'=1,\dots,k$, we have that $\gamma_{i'}(\bar{a})$
is true in \textbf{$\boldsymbol{M}$ }and also $\gamma_{i'}(\bar{a}_{j})$
holds in each $\boldsymbol{M}_{j}$. Now $\boldsymbol{M}_{j}\vDash\neg\rho_{j}(\bar{a}_{j})$
and for $i'\neq i$ the conjunct $\gamma_{i'}$ appears in $\rho_{j}$;
as we have noted, $\boldsymbol{M}_{j}\vDash\gamma_{i'}(\bar{a}_{j})$,
and so it follows that $\alpha_{i,j}(\bar{a}_{j})$ must be false
in $\boldsymbol{M}_{j}$. But as $\gamma_{i}(\bar{a})$ is true in
$\boldsymbol{M}$, there must exist $j\in\{1,\dots,r_{i}\}$ with
$\alpha_{i,j}(\bar{a})$ true. But then, we obtain the contradiction
that $\alpha_{i,j}(\bar{a}_{j})$ is true in $\boldsymbol{M}_{j}$.
Arrival at this contradiction completes the proof. \end{proof}

Because the class operators $\mathsf{H}$ and $\mathsf{P}$ are related in composition by $\mathsf{PH}\leq \mathsf{HP}$, Theorem \ref{thm:equationsystem} can be re-expressed as stating that an elementary class~$\mathcal{K}$ is definable by an equation system if and only if $\mathcal{K}=\mathsf{HP}(\mathcal{K})$.  
If we wish to drop the assumption that $\mathcal{K}$ is an elementary class, we need more care.  
Elementary classes are those closed under taking ultraproducts and elementary embeddings, however in the presence of $\mathsf{H}$ and $\mathsf{P}$, we may ignore ultraproducts because they are particular cases of applications by $\mathsf{HP}$.  We cannot however ignore elementary embeddings, as Example \ref{eg:2}(v) demonstrates.
Thus Theorem~\ref{thm:equationsystem} can be rephrased as ``\emph{a class $\mathcal{K}$ is the class of models of some equation systems if and only if it is closed under $\mathsf{E}$, $\mathsf{H}$ and $\mathsf{P}$}'', where $\mathsf{E}$ denotes closure under taking elementary embeddings.  
In addition to the aforementioned containment $\mathsf{PH}\leq \mathsf{HP}$, it is possible to show that $\mathsf{HE}\leq \mathsf{EHP}$, which points toward the composite $\mathsf{EHP}$ as being a single closure operator equivalent to iterated closure under combinations of $\mathsf{E}$, $\mathsf{H}$ and $\mathsf{P}$.  
Unfortunately the authors are not aware of a useful containment between $\mathsf{PE}$ and $\mathsf{EHP}$.  We refer simply to $\{\mathsf{E},\mathsf{H},\mathsf{P}\}$-closed classes and even $\{\mathsf{E},\mathsf{H},\mathsf{P}\}$-classes, though $\{\mathsf{E},\mathsf{H},\mathsf{P}\}^*$-closed may be more technically correct.
%\subsection{$\{\mathsf{E},\mathsf{H},\mathsf{P}\}$-closed classes as variety reducts}
An interesting consequence of Theorem \ref{thm:equationsystem} is that all equationally defined classes arise as reducts of varieties.  This is of course well-known for inverse semigroups and groups (as semigroups), but is not otherwise immediately obvious for other $\{\mathsf{E},\mathsf{H},\mathsf{P}\}$-classes.

The class of reducts of a variety is always closed under ultraproducts and direct products, but in general need not be closed under taking homomorphic images, nor subalgebras, nor even elementary embeddings.  There are plentiful easy examples demonstrating the failure of the first two of these closure properties.  For the case of elementary embeddings, we observe that real vector spaces form a variety (with vector addition as binary and  $\mathbb{R}$-many unary operations for scalar multiplication).  The class of reducts to the empty signature has no countably infinite members, and hence is not an elementary class.  
When the class of reducts of members of a variety is closed under $\mathsf{H}$ and $\mathsf{E}$ (as they are for groups and inverse semigroups), then Theorem \ref{thm:equationsystem} shows that the class is definable by the equation systems.  We now show a converse to this statement.

\begin{thm}\label{thm:reduct}
Let $\mathscr{L}$ be a signature and $\mathcal{K}$ an $\{\mathsf{E},\mathsf{H},\mathsf{P}\}$-closed class of $\mathscr{L}$-structures.  Then $\mathcal{K}$ is the class of reducts of a variety $\mathcal{V}$ in a signature extending $\mathscr{L}$.  If $\mathcal{K}$ is finitely axiomatisable in first order logic, then $\mathcal{V}$ can be chosen to be finitely based and have only finitely many new operations in comparison to $\mathcal{K}$.
\end{thm}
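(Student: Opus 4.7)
The plan is to combine Theorem~\ref{thm:equationsystem} (in its $\{\mathsf{E},\mathsf{H},\mathsf{P}\}$-form, discussed in the paragraph following its proof) with the standard Skolemisation of prenex formulas.  By that version of the theorem, $\mathcal{K}$ is axiomatised by a family $\Sigma$ of equation systems, each of the shape
\[
\sigma \;=\; \forall\bar a_1\,\exists\bar x_1\,\cdots\,\forall\bar a_k\,\exists\bar x_k\;\bigwedge_{i}(t_i=s_i)
\]
with a conjunction of equalities as matrix.  The first main step is to Skolemise each $\sigma\in\Sigma$: for each existentially quantified variable $x_{i,\ell}\in\bar x_i$ I introduce a new function symbol $f^{\sigma}_{i,\ell}$ of arity $|\bar a_1|+\cdots+|\bar a_i|$, and replace every occurrence of $x_{i,\ell}$ in the matrix by the Skolem term $f^{\sigma}_{i,\ell}(\bar a_1,\dots,\bar a_i)$.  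After dropping the existential quantifiers, $\sigma$ becomes a universally quantified conjunction of equalities in an extended signature $\mathscr{L}^+$, which splits into a set of identities.  I then let $\mathcal{V}$ be the variety of $\mathscr{L}^+$-structures defined by all the identities obtained in this way as $\sigma$ ranges over $\Sigma$.

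Next I would verify that $\mathcal{K}$ coincides with the class of $\mathscr{L}$-reducts of $\mathcal{V}$.  For one direction, any $B\in\mathcal{V}$ has an $\mathscr{L}$-reduct satisfying each $\sigma\in\Sigma$, because the interpretations of the $f^{\sigma}_{i,\ell}$ in $B$ directly supply existential witnesses as the universal tuples range.  For the converse direction, given $A\in\mathcal{K}$, a tuple-by-tuple application of the axiom of choice produces, for each new symbol $f^{\sigma}_{i,\ell}$, an interpretation on $A$ under which every Skolemised identity holds; the resulting $\mathscr{L}^+$-expansion of $A$ then lies in $\mathcal{V}$ and has $\mathscr{L}$-reduct equal to $A$.

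For the second statement, suppose $\mathcal{K}$ is finitely axiomatisable in first order logic.  Then $\mathcal{K}$ is an elementary class defined by a single sentence, and the last clause of Theorem~\ref{thm:equationsystem} supplies a single equation system $\sigma$ whose model class is $\mathcal{K}$.  Skolemisation of this single $\sigma$ introduces only finitely many new function symbols (one per existentially quantified variable of $\sigma$) and yields only finitely many identities (one per atomic conjunct of the matrix), so the resulting $\mathcal{V}$ is finitely based and extends $\mathscr{L}$ by only finitely many new operation symbols.

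I do not anticipate a deep obstacle: the whole argument is in essence a routine Skolemisation once Theorem~\ref{thm:equationsystem} is granted.  The points that need care are largely bookkeeping, namely assigning the correct arities to the Skolem functions for an alternating prefix and correctly recording which universal tuples precede each existential variable; and, in the finite case, being explicit that ``finitely axiomatisable in first order logic'' forces $\mathcal{K}$ to be elementary, so that the single-sentence half of Theorem~\ref{thm:equationsystem} is indeed available.
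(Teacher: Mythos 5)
Your proposal is correct and follows essentially the same route as the paper: invoke Theorem~\ref{thm:equationsystem} to axiomatise $\mathcal{K}$ by equation systems and then Skolemise away the existential quantifiers, checking both directions of the reduct correspondence exactly as the paper does. The only cosmetic difference is in the finite case, where you use the single-sentence clause of Theorem~\ref{thm:equationsystem} directly while the paper appeals to the Completeness Theorem to extract a finite family of equation systems; both yield the same conclusion.
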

\begin{proof}
As $\mathcal{K}$ is closed under taking homomorphic images, direct products and elementary embeddings, it can be axiomatised by a family of equation systems by Theorem \ref{thm:equationsystem}.  If $\mathcal{K}$ is finitely axiomatisable in first order logic, then the Completeness Theorem for first order logic ensures that it can equivalently be axiomatised by a finite family of equation systems.  We now explain how to replace each equation system in the family of equation systems by an identity in some extended signature.  A finite number of new operations is added for each equation system, so that if the family of equation systems defining $\mathcal{K}$ is finite, then so also will the resulting variety (after all equation systems are replaced) be finite. 

Consider a signature $\mathscr{L}$ and a sentence $(\forall\vec{x})\,(\exists y)\, \phi(\vec{x},y)$ in $\mathscr{L}$, where $\vec{x}$ abbreviates $x_1,\dots,x_n$ (for some $n$, possibly $0$) and where $\phi(\vec{x},y)$ may contain other quantified variables that are not displayed.  Let $f$ be a new $n$-ary operation symbol.  It easily verified that the models of $(\forall\vec{x})\,(\exists y)\, \phi(\vec{x},y)$ are precisely the $\mathscr{L}$-reducts of models of $(\forall\vec{x})\, \phi(\vec{x},f(x_1,\dots,x_n))$.  Indeed, if ${\bf M}\models (\forall\vec{x})\,(\exists y)\, \phi(\vec{x},y)$ then we may expand the signature $\mathscr{L}$ of ${\bf M}$ to include $f$ by defining $f$ at each tuple $\vec{a}\in  {\bf M}^n$ to be any witness $x$ to $(\exists x)\,  \phi(\vec{a},x)$.  This expansion of ${\bf M}$ is not necessarily unique, but all such expansions are models of $(\forall\vec{x})\, \phi(\vec{x},f(x_1,\dots,x_n))$.  Conversely the $\mathscr{L}$-reduct of any model ${\bf N}$ of $(\forall\vec{x})\,\phi(\vec{x},f(x_1,\dots,x_n))$ will be a model of $(\forall\vec{x})\,(\exists y)\, \phi(\vec{x},y)$, as the value of~$f$ at any tuple $\vec{a}\in {\bf N}^n$ provides the witness $x$ required in $(\exists x)\, \phi(\vec{a},x)$.
This process is known as \emph{Skolemisation}; see \cite[\S3.3]{chakei} or \cite[\S3.1]{hod} for example.  An application Skolemisation to an equation system produces an equation system with one fewer existential quantifiers.  Repeated applications eventually yield an equation system without any existential quantifiers.  Such an equation system is a finite set of identities.
\end{proof}
As a first example, Skolemising the defining equation $(\exists x)(\forall a)\colon xa=ax=a$ for monoids (as semigroups), we introduce a nullary operation $e$ to replace $x$ to obtain $(\forall a)\colon ea=ae=a$, the familiar definition as a semigroup with constant.
As a second example, we consider the result of applying Skolemisation to the definition \eqref{eqn:group} for the class of groups as semigroups.  The given sentence is $(\forall a\forall b)(\exists x\exists y)\colon (ax=b)\wedge (ya=b)$.  Skolemising once (using the symbol $\backslash$ for the introduced Skolem function) we obtain $(\forall a\forall b)(\exists y)\colon a(a\backslash b)=b\wedge ya=b$, and then a second time (using $/$) we obtain $(\forall a\forall b)\colon a(a\backslash b)=b\wedge (b/a)a=b$ (note that $b/a$ might have more consistently been written as $a/b$, however it is immediately clear that the required value is the element $ba^{-1}$).  Thus groups (as semigroups) are the class of reducts of the variety with two additional binary operations $\backslash,/$ defined by the identities $a(a\backslash b)=b$ and $(b/a)a=b$ in addition to associativity of the semigroup multiplication.
  
\section{Equational bases for e-varieties of semigroups }\label{sec:evar}

The theory of semigroup e-varieties was devised by T.E. Hall \cite{hal89}
and others in order create an interface between the theory of regular
semigroups and universal algebra. The theory endows $\{\mathsf{H},\mathsf{P}\}$-closed classes
of regular semigroups with the structure of a varietal type through
introduction of unary operations corresponding to choices of inverses
for elements of the regular semigroups involved. From Section \ref{sec:EHP} we
know that we may, at least in principle, represent such classes by
equational bases using only the associative binary operation with
which the semigroup is naturally endowed. We shall henceforth refer
to these as  $\{\mathsf{E},\mathsf{H},\mathsf{P}\}$-bases, indicating that they determine a defining set
of equations for a class that is closed under $\mathsf{E}$, $\mathsf{H}$ and $\mathsf{P}$. 

Three fundamental classes that form e-varieties are the classes $\mathcal{I}$
of all \emph{inverse semigroups}, $\mathcal{O}$ of \emph{orthodox semigroups},
and $\mathcal{ES}$ of so-called \emph{$E$-solid semigroups}. In this
section we obtain finite equational bases for these benchmark e-varieties.
The bases each involve choosing inverses for two arbitrary members
$a,b\in S$ and adjoining a second set of equations that ensure that
all products of the associated idempotents of a certain length (length
$3$ for $\mathcal{ES}$ and for $\mathcal{O}$, and $2$ for $\mathcal{I}$)
have a particular property associated with the class (are group members
for $\mathcal{ES}$, are idempotents for $\mathcal{O}$, and commute with
one another in the case of $\mathcal{I}$). 

We begin with the equational basis problem for the class $\mathcal{ES}$
of all \emph{$E$-solid} \emph{semigroups}, which are those regular
semigroups $S$ that satisfy the \emph{solidity condition} that for
idempotents $e,f,g\in E(S)$ 
\begin{equation}
e\LL f\RR g\rightarrow\exists h\in E(S)\colon e\RR h\LL g.
\label{eq:solidity}
\end{equation}
Note that $\mathcal{\mathcal{O}\subseteq ES}$ and $\mathcal{ES\cap IG=CR}$.
We make use of the fact, taken from~\cite{hal73}, that a semigroup $S$
is $E$-solid if and only if $S$ is a regular semigroup for
which the idempotent-generated subsemigroup $\langle E(S)\rangle$
is a union of groups.  

The equational basis that we use here for the class of E-solid semigroups consists of the following equations:
\begin{multline}
(\forall a,b)\,(\exists x,y,z_1,z_2, \cdots , z_{64})\colon x \in V(a) \wedge y \in V(b)\\
\ \wedge z_1 \in V(ax\cdot ax \cdot ax) \wedge z_1(ax\cdot ax \cdot ax) = (ax \cdot ax \cdot ax)z_1 \\
\ \wedge z_2 \in V(ax\cdot ax \cdot xa) \wedge z_2(ax\cdot ax \cdot xa) = (ax \cdot ax \cdot xa)z_2 \\
\ \vdots \\
\ \wedge z_{64} \in V(yb\cdot yb \cdot yb) \wedge z_{64}(yb \cdot yb \cdot yb) = (yb \cdot yb \cdot yb)z_{64}
\end{multline}
where the last $64$ lines run over all products of three (not necessarily distinct) members of the set $F = \{ax,xa,by,yb\}$.  There are redundancies here as some subsets of the collection are adequate equational bases for the class of E-solid semigroups but the above array is convenient for the uniformity of  presentation that it offers.  We may express this more succintly however by writing this basis as follows:
\begin{equation*}
(\forall a\forall b)\,(\exists x\exists y)\colon (x\in V(a))\wedge (y\in V(b))\wedge (p\in G).
\end{equation*}
where $p$ denotes the product $g_{1}g_{2}g_{3}$ for $g_1,g_2,g_3\in F$.  

A basic theorem of semigroups due to Miller and Clifford \cite{clipre} is that in any semigroup $S$, if $x \LL a \RR  y$  with $a$ a member of a subgroup of $S$ then $x \RR xy \LL y$, or in other words $xy \in R_x \cap L_y$.    This fact will be used frequently in the proofs of this section without further reference. 

\begin{theorem}\label{thm:ESolid}
The class $\mathcal{ES}$ of all $E$-solid semigroups
$S$ is the $\{\mathsf{E},\mathsf{H},\mathsf{P}\}$-class with $\{\mathsf{E},\mathsf{H},\mathsf{P}\}$-basis\up:
\begin{equation}
(\forall a,b)\,(\exists x,y)\colon (x\in V(a)\wedge y\in V(b))\wedge \bigwedge_{p=g_1g_2g_3}(p\in G)
\label{eq:Esolid}
\end{equation}
where $g_1,g_2,g_3\in F=\{ax,xa,by,yb\}$.
%\begin{equation}
%(\forall a,b)\,(\exists x,y)\colon (x\in V(a)\wedge y\in V(b))
%\label{eq:inverses}
%\end{equation}
%and 
%\begin{equation}
%(\forall p=g_{1}g_{2}g_{3}, g_{j}\in F=\{ax,xa,by,yb\})\colon(p \in G). \label{eq:Esolid}
%\end{equation}
\end{theorem}
\begin{proof} 

If $S$ is E-solid, then $S$ is regular and so the first part of \eqref{eq:Esolid} holds: for every $a,b$ we can find $x\in V(a)$ and $y\in V(b)$. Moreover, since $\langle E(S) \rangle $ is a union
of groups, for any product $p$ of idempotents, we have that the condition $p \in G$  is satisfied. Thus, since $F = \{ax, xa, by, yb\} \subseteq  E(S)$, it follows that the whole equation system \eqref{eq:Esolid} holds.

Conversely, suppose that $S$ satisfies the equation system 
\eqref{eq:Esolid}; the first conjuncts of \eqref{eq:Esolid} ensure that $S$ is regular. Take $a,b\in E(S)$ and take witnesses $x,y\in S$ for satisfaction of~\eqref{eq:Esolid}. Since $x = xax = xa\cdot ax \in F^2 \subseteq F^3$, \eqref{eq:Esolid} tells us that~$H_x$ is a group and $x\in E(S)$. Moreover $ax \cdot xa = axa = a$.  Similarly,~$H_y$ is a group, $y\in E(S)$, $yb \cdot by = y$  and $by \cdot  yb = b$.

Now suppose we have $f \in E(S)$  such that $a \LL f \RR b$. We need to show that $a \RR g \LL b$
for some idempotent $g$, or equivalently that $R_a \cap L_b = H_{ab}$  is a group (refer to the $\DD$-class diagram below).  
\begin{center}
\begin{tabular}{|c|c|c|c|c|}
\hline 
$x$ & $xa$ &  & $xa \cdot b$  & \tabularnewline
\hline 
$ax$ & $a$ &  & $a \cdot yb$  & \tabularnewline
\hline 
 &  & $\cdots$  &  & \tabularnewline
\hline 
 & $f$ &  & $b$ & $by$\tabularnewline
\hline 
 & $yb \cdot xa$ &  & $yb$ & $y$\tabularnewline
\hline 
\end{tabular} 
\end{center}

Observe now that $xa \cdot b = xa \cdot by \cdot yb$  so that $H_{xa\cdot b} = R_{xa} \cap L_b$  is a group. It follows in turn that $R_{yb} \cap L_{xa} = H_{yb  \cdot xa}$ is also a group. Finally we infer that $H_{ab} = H_{a \cdot yb} = H_{ax \cdot xa \cdot yb}$  is too a group, as required.  \end{proof}

%Conversely, suppose that $S$ satisfies the equation systems \eqref{eq:inverses} and
%\eqref{eq:Esolid}; by~\eqref{eq:inverses},~$S$ is regular. Take $a,b\in E(S)$ and take $x,y\in S$
%so that~\eqref{eq:inverses} and~\eqref{eq:Esolid} are satisfied. Since $x = xax = xa\cdot ax \in F^2 \subseteq F^3$, (28) tells us that~$H_x$ is a group and $x\in E(S)$. Moreover $ax \cdot xa = axa = a$.  Similarly,~$H_y$ is a group, $y\in E(S)$, $yb \cdot by = y$  and $by \cdot  yb = b$.
%
%Now suppose we have $f \in E(S)$  such that $a \LL f \RR b$. We need to show that $a \RR g \LL b$
%for some idempotent $g$, or equivalently that $R_a \cap L_b = H_{ab}$  is a group (refer to the $\DD$-class diagram below).  
%\begin{center}
%\begin{tabular}{|c|c|c|c|c|}
%\hline 
%$x$ & $xa$ &  & $xa \cdot b$  & \tabularnewline
%\hline 
%$ax$ & $a$ &  & $a \cdot yb$  & \tabularnewline
%\hline 
% &  & $\cdots$  &  & \tabularnewline
%\hline 
% & $f$ &  & $b$ & $by$\tabularnewline
%\hline 
% & $yb \cdot xa$ &  & $yb$ & $y$\tabularnewline
%\hline 
%\end{tabular} 
%\end{center}
%
%Observe now that $xa \cdot b = xa \cdot by \cdot yb$  so that $H_{xa\cdot b} = R_{xa} \cap L_b$  is a group. It follows in turn that $R_{yb} \cap L_{xa} = H_{yb  \cdot xa}$ is also a group. Finally we infer that $H_{ab} = H_{a \cdot yb} = H_{ax \cdot xa \cdot yb}$  is also a group, as required.  \end{proof}

\begin{theorem} \label{thm:orthodox}
The class $\mathcal{O}$ of orthodox semigroups
$S$ is the $\{\mathsf{E},\mathsf{H},\mathsf{P}\}$-class with $\{\mathsf{E},\mathsf{H},\mathsf{P}\}$-basis\up: 
\begin{equation}
(\forall a,b)\,(\exists x,y)\colon (x\in V(a)\wedge y\in V(b))\wedge \bigwedge_{p=g_1g_2g_3}(p\in E(S))\label{eq:orthodox}
\end{equation}
where $g_1,g_2,g_3 \in F=\{ax,xa,by,yb\}$.
\end{theorem}
\begin{proof} Clearly, since the members of $F$ are idempotents,
the equation system \eqref{eq:orthodox} is satisfied by any orthodox semigroup
$S$. Conversely suppose that $S$ satisfies \eqref{eq:orthodox}. Let $a\in E=E(S)$
and take any $b\in V(a)$. We shall show that $b\in E(S)$. Since
in any regular semigroup $V(E)=E^{2}$, it follows from this that
$E=E^{2}$, which is to say that $S$ is orthodox. With $x,y$ witnessing satisfaction of \eqref{eq:orthodox} for the chosen $a,b$, we have $x=xax=xa\cdot ax\in E$, from which it follows that $ax\cdot xa=axa=a$. We have the
following $\DD$-class diagram, the remaining entries of which
are explained below. 
\begin{center}
\begin{tabular}{|c|c|c|c|c|}
\hline 
$x$ & $xa$ &  &  & $xa \cdot by$\tabularnewline
\hline 
$ax$ & $a$ & & $ab$ & $aby$\tabularnewline
\hline 
 &  & $\cdots $ &  & \tabularnewline
\hline 
 & $ba$ &  & $b$ & $by$\tabularnewline
\hline 
 & $yba$ &  & $yb$ & $y$\tabularnewline
\hline 
\end{tabular} \end{center}

Since $ba\in E(S)$ we have $xa\cdot by\in E(S)$ placed as shown;
hence $by\cdot xa=ba$. Similarly since $ab\in E(S)$ we have $yb\cdot a=yb\cdot ax\cdot xa\in E(S)$
as shown. Since  $a\cdot yb=ax\cdot xa\cdot yb\in E(S)$ it now follows that $a\cdot yb=ab.$
Putting these factorizations together yields:
\[
b=b\cdot ab=b\cdot ayb=ba\cdot yb=by\cdot xa\cdot yb\in E(S),
\]
which completes the proof. 
\end{proof}

\begin{theorem}\label{thm:inversesgp}  The class $\mathcal{I}$ of inverse semigroups $S$
is the $\{\mathsf{E},\mathsf{H},\mathsf{P}\}$-class with $\{\mathsf{E},\mathsf{H},\mathsf{P}\}$-basis\up: 
\begin{equation}
(\forall a,b)\,(\exists x,y)\colon (x\in V(a)\wedge y\in V(b))\wedge\bigwedge_{g_{1},g_{2}\in \{ax,xa,by,yb\}}
(g_{1}g_{2}=g_{2}g_{1}).\label{eq:inversedef}
\end{equation}
\end{theorem}
\begin{proof} Clearly any inverse semigroup satisfies \eqref{eq:inversedef}
so only the converse is in question. By the first part of \eqref{eq:inversedef} we have that $S$ is regular so let
us take $a,b\in E(S)$. (We require only three of the six equations
specified by the final group of conjuncts in \eqref{eq:inversedef}). From \eqref{eq:inversedef} we obtain $x = xax = xa \cdot ax = ax \cdot xa$, 
whence $x = axa = a$, and so $a = a^2 = ax$. Similarly, $b = yb$. Then from (29), $ab = ax \cdot yb = yb \cdot ax = ba$.
Since $S$ is regular and every pair of idempotents of $S$ commute,
we have proved that $S$ an inverse semigroup. \end{proof}

%\begin{theorem}\label{thm:inversesgp}  The class $\mathcal{I}$ of inverse semigroups $S$
%is the $\{\mathsf{E},\mathsf{H},\mathsf{P}\}$-class with $\{\mathsf{E},\mathsf{H},\mathsf{P}\}$-basis\up: 
%\begin{equation}
%(\forall a,b)\,(\exists x,y)\colon (x\in V(a)\wedge y\in V(b))\label{eq:inversedef}
%\end{equation}
%and 
%\begin{equation}
%(\forall g_{1},g_{2}\in F=\{ax,xa,by,yb\})\colon (g_{1}g_{2}=g_{2}g_{1}).\label{eq:inversedef2}
%\end{equation}
%\end{theorem}
%\begin{proof} Clearly any inverse semigroup satisfies \eqref{eq:inversedef} and \eqref{eq:inversedef2}
%so only the converse is in question. By \eqref{eq:inversedef} $S$ is regular so let
%us take $a,b\in E(S)$. (We require only three of the six equations
%specified by \eqref{eq:inversedef2}). From \eqref{eq:inversedef2} we obtain $x = xax = xa \cdot ax = ax \cdot xa$, 
%whence $x = axa = a$, and so $a = a^2 = ax$. Similarly, $b = yb$. Then from (32), $ab = ax \cdot yb = yb \cdot ax = ba$.
%Since $S$ is regular and every pair of idempotents of $S$ commute,
%we have proved that $S$ an inverse semigroup. \end{proof}

\begin{example}
The products of length $3$ in Theorems \ref{thm:ESolid} and \ref{thm:orthodox} cannot be replaced by products of length 2 as in Theorem \ref{thm:inversesgp}.
\end{example}
\begin{proof} 
We demonstrate this by finding an assignment
of an inverse to each member of a specific regular semigroup $S$ in such a way that $S$ satisfies the length-two version of equation system~\eqref{eq:orthodox}:
\begin{equation}
(\forall a,b)\,(\exists x,y)\colon (x\in V(a)\wedge y\in V(b))\wedge \bigwedge_{p=g_1g_2}(p\in E(S)),\label{eq:ength2}
\end{equation}
where $g_{1},g_2\in F=\{ax,xa,by,yb\}$.
However, $S$ is not $E$-solid (and so also not orthodox). 

Let $S$ be the $0$-rectangular band with non-zero $\DD$-class
$D$ defined by the first `eggbox' of the following three diagrams,
where an asterisk denotes an idempotent: 

\begin{center}
\begin{tabular}{|c|c|c|c|}
\hline 
{*} & {*} &  & {*}\tabularnewline
\hline 
{*} & {*} &  & \tabularnewline
\hline 
 & {*} &  & {*}\tabularnewline
\hline 
 &  & {*} & \tabularnewline
\hline 
\end{tabular}~~~~~~~%
\begin{tabular}{|c|c|c|c|}
\hline 
 & 1 &  & \tabularnewline
\hline 
1 &  &  & \tabularnewline
\hline 
 &  &  & 1\tabularnewline
\hline 
 &  & 1 & \tabularnewline
\hline 
\end{tabular}~~~~~~~%
\begin{tabular}{|c|c|c|c|}
\hline 
2 & 2 &  & 2\tabularnewline
\hline 
2 & 2 &  & \tabularnewline
\hline 
 & 2 &  & 2\tabularnewline
\hline 
 &  & 2 & \tabularnewline
\hline 
\end{tabular} \end{center}

We see that $S$ is regular but not $E$-solid since, for example,
the entry at position $(2,4)$ is not idempotent despite the presence
of idempotents at positions $(2,2),\,(3,2)$, and $(3,4)$. We shall
write $(i,j)$ $(1\leq i,j\leq4)$ to denote the element of $D$ in
that corresponding position in the diagram. Let $a\in D$. We shall
choose $x\in V(a)$ writing this selection in the form $a\rightarrow x$.
We assign $(1,1)\leftrightarrow(2,2)$,~$(1,2)\rightarrow(1,2),(2,1)\rightarrow(2,1)$,
$(3,3)\leftrightarrow(4,4)$, $(3,4)\rightarrow(3,4)$, $(4,3)\rightarrow(4,3)$;
$(1,3)\leftrightarrow(4,2)$, $(1,4)\leftrightarrow(3,2)$; $(2,3)\leftrightarrow(4,1)$,
$(2,4)\leftrightarrow(3,1)$. 

In the second diagram the idempotent products of the form $ax$ or
$xa$ (under the previous assignment) are indicated by the numeral
$1$, which are all starred in the first diagram.

In the third diagram the non-zero products of pairs of idempotents
of the form $ax$ or $yb$ $(a,b\in S)$ are indicated by the numeral
$2$. Since each instance of the numeral $2$ lies in a starred square
in the first diagram, it follows that, with the given assignment of
inverses, $S$ satisfies the equations of ~\eqref{eq:ength2} but $S$ is
not $E$-solid. Therefore the equational basis given by ~\eqref{eq:ength2}
does not imply $E$-solidity.\end{proof}

\section{Comparing e-varieties to \mbox{$\{E,H,P\}$}-classes of
regular semigroups}
As mentioned in the introduction, Hall \cite{hal89} initiated the study
of e-varieties of regular semigroups, which are classes $\mathcal{C}$
of regular semigroups closed under~$\mathsf{H}$, the taking of homomorphic
images,~$\mathsf{P}$, the taking of direct products, and~$\mathsf{S_{e}}$, the taking
of regular subsemigroups. In this section we look at the connection
between e-varieties and $\{\mathsf{E},\mathsf{H},\mathsf{P}\}$-classes of regular semigroups. 

\vspace{.2cm}

\begin{theorem} Any e-variety $\mathcal{C}$ of regular semigroups
is an $\{\mathsf{E},\mathsf{H},\mathsf{P}\}$-class of regular semigroups but in general the
converse does not hold.
\end{theorem}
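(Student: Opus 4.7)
The statement is a conjunction, so my plan is to handle the forward implication first and then construct a counterexample for the converse.

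For the forward direction, my plan is to reduce $\mathsf{E}$-closure of an e-variety $\mathcal{C}$ to its $\mathsf{S_{e}}$-closure, using the fact that regularity is a first-order property. Since $\mathcal{C}$ is already closed under $\mathsf{H}$ and $\mathsf{P}$ by the very definition of an e-variety, only $\mathsf{E}$-closure needs genuine argument. Suppose that $A$ elementarily embeds into some $B\in\mathcal{C}$ and identify $A$ with its image, so that $A$ becomes an elementary substructure of $B$. Since the first-order sentence $(\forall a)(\exists x)\,axa=a$ holds in $B$, it also holds in $A$, so $A$ is regular. Hence $A$ is a regular subsemigroup of $B$, and the $\mathsf{S_{e}}$-closure of the e-variety places $A$ back in $\mathcal{C}$.

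For the converse direction, my plan is to exhibit a class that is $\{\mathsf{E},\mathsf{H},\mathsf{P}\}$-closed among regular semigroups but that fails to be $\mathsf{S_{e}}$-closed. A clean candidate is $\mathcal{M}\cap\mathcal{R}eg$, the class of regular monoids considered in the pure semigroup signature. Since both $\mathcal{M}$ and $\mathcal{R}eg$ admit equational bases as recorded in Section~\ref{sec:examples}, their conjunction does too, and Theorem~\ref{thm:equationsystem} delivers the required $\{\mathsf{E},\mathsf{H},\mathsf{P}\}$-closure. To witness failure under $\mathsf{S_{e}}$, I would take the full transformation monoid $T_{X}$ on a two-element set $X$; it is plainly a regular monoid, yet its two-element subsemigroup of constant maps is a left-zero band, so a regular subsemigroup that does not possess an identity, and hence not a member of $\mathcal{M}\cap\mathcal{R}eg$.

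The main obstacle is conceptual rather than technical. For the forward direction, one must notice that an e-variety is already closed under $\mathsf{H}$ and $\mathsf{P}$, so that the whole content reduces to the near-triviality that regularity is first-order expressible. For the counterexample, the key insight is that being a monoid is a first-order property preserved by $\mathsf{H}$, $\mathsf{P}$, and $\mathsf{E}$ but is notoriously fragile under taking subsemigroups, so any equation system that requires a distinguished identity element will readily separate $\{\mathsf{E},\mathsf{H},\mathsf{P}\}$-closure from $\mathsf{S_{e}}$-closure.
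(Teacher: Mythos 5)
Your argument is correct, and the forward direction is essentially the paper's own: closure under $\mathsf{H}$ and $\mathsf{P}$ is built into the definition of an e-variety, and for $\mathsf{E}$-closure one observes that an elementarily embedded subsemigroup inherits regularity (a first-order property) and then invokes $\mathsf{S_{e}}$-closure. Where you diverge is in the converse. Your counterexample, the class of regular monoids in the plain semigroup signature with the two-element band of constant maps inside the full transformation monoid on two points as the offending regular subsemigroup, is valid and does prove the literal statement (a small quibble: under the right-action convention the constants form a right-zero rather than a left-zero band, but either way it is a regular subsemigroup with no identity). The paper in fact mentions exactly this class $\mathcal{MR}$ of regular monoids as an easy example, but then deliberately sets it aside because the failure can be ``repaired'' by working with e-varieties of regular monoids instead of regular semigroups. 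To get a counterexample that cannot be explained away in this fashion, the paper introduces the class $\mathcal{NR}$ defined by $(\forall a)(\exists x,y)\colon a=axa=ya=ay$, which is not a monoid class and uses only $\forall\cdots\exists$ quantification, and shows it is not $\mathsf{S_{e}}$-closed by taking $T_{3}$ and its regular subsemigroup of non-permutations, where the required two-sided local identity for a suitably chosen map is forced to be the identity of $T_{3}$. So your route is shorter and suffices for the theorem as stated, while the paper's route buys a more robust separation between $\{\mathsf{E},\mathsf{H},\mathsf{P}\}$-classes and e-varieties. One last cosmetic point: Theorem~\ref{thm:equationsystem} as such gives $\{\mathsf{H},\mathsf{P}\}$-closure of classes defined by equation systems; the $\mathsf{E}$-closure of your class comes simply from its being an elementary class, which is worth saying explicitly.
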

\begin{proof} Certainly $\mathcal{C}$ is a class of regular semigroups
closed under the operators $\mathsf{H}$ and $\mathsf{P}$. Moreover, if $S\in\mathcal{C}$
and $U$ is a subsemigroup that is elementarily embedded in $S$ then
$U$ is also regular as regularity is a first order property. Therefore
$U\in\mathcal{C}$ and so $\mathcal{C}$ is an $\{\mathsf{E},\mathsf{H},\mathsf{P}\}$-class
of regular semigroups. 

It is easy to check that the class $\mathcal{M}\mathcal{R}$ of all regular
monoids is an $\{\mathsf{E},\mathsf{H},\mathsf{P}\}$-class that is not an e-variety but this
example could however be accommodated as an e-variety of regular monoids.
For an example that is not a monoid class and which involves equations
only of the type $\forall\,\cdots\,\exists$, consider the more general
$\{\mathsf{E},\mathsf{H},\mathsf{P}\}$-class:
\[
\mathcal{LR_{\rm reg}}\colon (\forall a)(\exists x,y)\colon a=axa=ya=ay.
\]
Then $\mathcal{LR_{\rm reg}}$ is a class of regular semigroups but, as we shall
verify, $\mathcal{LR_{\rm reg}}$ is not closed under $\mathsf{S_{e}}$. First note that
$\mathcal{MR}\subseteq\mathcal{LR_{\rm reg}}$ and in particular the full transformation
semigroup $T_{3}\in\mathcal{LR_{\rm reg}}$. Next observe that the subsemigroup
$U$ of $T_{3}$ that consists of all mappings that are not permutations
is a regular subsemigroup of $T_{3}$. Now take the mapping $a\in U$
defined by the action $1\mapsto2\mapsto3\mapsto3$ and suppose that
$y\in T_{3}$ is such that $a=ay=ya$. Then since $a=ay,$ $y$ acts
identically on the range of $a$ so that $2y=2$ and $3y=3$. On the
other hand, since $a=ya$ we cannot have $1y>1$ for then $1ya\in\{2,3\}a=3\neq2=1a$.
Hence $1y=1$ and so $y$ is the identity mapping. In particular,
$y\not\in U$ and therefore $U\not\in\mathcal{LR_{\rm reg}}$, thereby showing
that $\mathcal{LR_{\rm reg}}$ is not an e-variety, despite $\mathcal{LR_{\rm reg}}$ being
an $\{\mathsf{E},\mathsf{H},\mathsf{P}\}$-class of regular semigroups. \end{proof}

The e-variety approach involves the introduction of a unary operation
into the signature of the algebra corresponding to arbitrary choices
of inverses for the members of the regular semigroup. Nonetheless,
the bases involved may sometimes be interpreted to provide bases for these e-varieties viewed as 
$\{\mathsf{E}, \mathsf{H}, \mathsf{P}\}$-classes. That approach works well in the cases of $\mathcal{I}$,
$O,$ and $\mathcal{ES},$ the classes of inverse, orthodox, and E-solid
semigroups respectively.  The following is an adaptation of the corresponding
propositions of Hall \cite[Sections~4.2,~4.4, and~4.5]{hal89}. The resulting
bases are quite different from those provided in Theorems \ref{thm:ESolid}, \ref{thm:orthodox},
and \ref{thm:inversesgp}; please note also Remark \ref{rem:reg}.

\begin{theorem}\label{thm:regularlaws} Let $\mathcal{C}$ denote any $\{\mathsf{E},\mathsf{H},\mathsf{P}\}$-class that
admits the equations:
\begin{equation}
(\forall a,b)\,(\exists x,u,v)\colon x\in V(a)\wedge u\in V(a^{2})\wedge v\in V(b^{2}).\tag{\textup{\textsf{reg}}}\label{eq:regularspecial}
\end{equation}
Then any $S\in\mathcal{C}$ is regular and $aua,bvb\in E(S)$. Furthermore
the $\{\mathsf{E},\mathsf{H},\mathsf{P}\}$-classes $\mathcal{I}$, $\mathcal{O}$, and $\mathcal{ES}$
of inverse, orthodox, and E-solid semigroups respectively are defined
by the equation system \eqref{eq:regularspecial} together with the equations\textup:
\begin{equation}
\mathcal{I}\colon \textup{\ref{eq:regularspecial}}\wedge aua\cdot bvb=bvb\cdot aua\textup;\label{eqn:Inv2}
\end{equation}
\begin{equation}
\mathcal{O}\colon \textup{\ref{eq:regularspecial}}\wedge aua\cdot bvb\in E\textup;\label{eqn:O2}
\end{equation}
\begin{equation}
\mathcal{ES}\colon \textup{\ref{eq:regularspecial}}\wedge aua\cdot bvb\in G.\label{eqn:ES2}
\end{equation}
\end{theorem}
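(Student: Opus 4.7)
The plan is to first dispatch the preliminary claim common to all three cases, then handle both directions of the three characterisations in parallel. For the preliminary, the condition $x\in V(a)$ immediately gives $a=axa$, so $S$ is regular, while the idempotence of $aua$ follows from the one-line computation $(aua)^{2}=au\cdot a^{2}u\cdot a=a(ua^{2}u)a=aua$, using $ua^{2}u=u$; by symmetry, $bvb\in E(S)$ as well.

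For the forward direction of each of the three characterisations, \eqref{eq:regularspecial} is satisfied in any regular semigroup by choosing $x\in V(a)$, $u\in V(a^{2})$, $v\in V(b^{2})$. The additional condition then follows from $aua,bvb\in E(S)$: in $\mathcal{I}$ idempotents commute; in $\mathcal{O}$ the product of two idempotents is itself idempotent; in $\mathcal{ES}$ the product $aua\cdot bvb$ lies in the core $\langle E(S)\rangle$, which is a union of groups by Hall's characterisation, so $aua\cdot bvb\in G$.

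The backward direction turns on a simple but essential observation: if the parameter $a$ is itself an idempotent $e$, then $a^{2}=e$, so $V(a^{2})=V(e)$, and any witness $u$ supplied by the equation system satisfies $eue=e$ by definition of an inverse; thus $aua=a$ whenever $a\in E(S)$, and similarly $bvb=b$ when $b\in E(S)$. Specialising the defining equations to arbitrary idempotent parameters $a=e$, $b=f$ therefore collapses the conditions $aua\cdot bvb=bvb\cdot aua$, $aua\cdot bvb\in E(S)$, and $aua\cdot bvb\in G$ into the respective statements $ef=fe$, $ef\in E(S)$, and $ef\in G$. The first two yield the desired conclusion at once: a regular semigroup with commuting idempotents is inverse, while one whose idempotents form a subsemigroup is orthodox.

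The principal obstacle is extracting $E$-solidity from the apparently weaker conclusion that $ef\in G$ for every pair $e,f\in E(S)$. The approach is to verify the solidity condition \eqref{eq:solidity} directly: suppose $e\LL f\RR g$ for idempotents $e,f,g$. The Miller--Clifford lemma noted just before Theorem~\ref{thm:ESolid}, applied with $f$ in its (group) $\mathscr{H}$-class $H_{f}$, gives $eg\in R_{e}\cap L_{g}$; since $eg$ is a product of two idempotents, $eg\in G$, so $H_{eg}=R_{e}\cap L_{g}$ is a group and therefore contains an idempotent $h$ with $e\RR h\LL g$, establishing solidity and hence $S\in\mathcal{ES}$.
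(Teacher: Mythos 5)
Your proposal is correct and follows essentially the same route as the paper: establish regularity and the idempotence of $aua$, $bvb$ via $ua^{2}u=u$, get the forward directions from commuting idempotents, closure of $E(S)$, and Hall's core-is-a-union-of-groups characterisation, and get the converses by specialising to idempotent parameters (where $aua=a$, $bvb=b$), finishing the $E$-solid case with the Miller--Clifford location fact to place the product of idempotents in $R_e\cap L_g$ and extract the idempotent witnessing solidity. The only cosmetic difference is that the paper writes the collapse as the chain $ab=a^{2}b^{2}=a^{2}ua^{2}\cdot b^{2}vb^{2}=aua\cdot bvb$ rather than isolating the observation $aua=a$ for idempotent $a$, which is the same computation.
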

\begin{remark}\label{rem:reg} The equation system \eqref{eq:regularspecial} is equivalent to the single
regularity equation $(\forall a)(\exists x)\colon a=axa$ in that this
equation and \eqref{eq:regularspecial} both define the same $\{\mathsf{E},\mathsf{H},\mathsf{P}\}$-class, that being
the class $\mathcal{R}eg$ of all regular semigroups. The redundancies
of \eqref{eq:regularspecial} however serve as a convenient presentational device.
\end{remark}
\begin{proof}[Proof of Theorem \ref{thm:regularlaws}]
 Any semigroup $S$ satisfying \eqref{eq:regularspecial} is obviously regular.
What is more $(aua)^{2}=a(ua^{2}u)a=aua$ is idempotent, as is $bvb$ (where, as is clear, $u,v$ are any witnesses to the existential quantifiers in \eqref{eq:regularspecial}).
If $S$ is inverse, then $S$ satisfies \eqref{eqn:Inv2} as idempotents commute,
while if $S$ is orthodox then $S$ satisfies \eqref{eqn:O2}. If $S$ is E-solid
then the product of idempotents lies in a subgroup (as the core of
$S$ is a union of groups) and so \eqref{eqn:ES2} holds. 

Conversely suppose that $S$ satisfies \eqref{eq:regularspecial} and \eqref{eqn:Inv2} so that $S$ is
regular and take any $a,b\in E(S)$. We then have, 
\[
ab=a^{2}b^{2}=a^{2}ua^{2}\cdot b^{2}vb^{2}=aua\cdot bvb=bvb\cdot aua=b^{2}vb^{2}\cdot a^{2}ua^{2}=b^{2}a^{2}=ba
\]
and so idempotents commute in $S$, which is therefore an inverse
semigroup.

Next assume that $S$ is a (regular) semigroup satisfying \eqref{eq:regularspecial} and
\eqref{eqn:O2} and once again take $a,b\in E(S)$. Then
\[
ab=a^{2}b^{2}=a^{2}ua^{2}\cdot b^{2}vb^{2}=aua\cdot bvb\in E
\]
and so $S$ is orthodox. 

Finally assume that $S$  is a (regular) semigroup satisfying \eqref{eq:regularspecial} and
\eqref{eqn:ES2}. Let $a,e,b\in E(S)$ be such that $a\LL e\RR b.$ Then
\begin{equation}
ab=a^{2}b^{2}=a^{2}ua^{2}\cdot b^{2}vb^{2}=aua\cdot bvb\in G.\label{eqn:ES3}
\end{equation}
On the other hand $ab\in R_{a}\cap L_{b}$ and this together with
\eqref{eqn:ES3} shows that $R_{a}\cap L_{b}$ contains an idempotent $f$ say,
whence $a\RR f\LL b$, thus proving that $S$ is indeed E-solid.
\end{proof}

For any property $T$, a regular semigroup $S$ is called \emph{locally
}$T$ if for every idempotent $e\in E(S)$ the \emph{local subsemigroup
}$eSe$ has property $T$. A regular semigroup $S$ is locally regular
as for any $a\in eSe$, if $x\in V(a)$ then $exe\in V(a)\cap eSe$,
as can be readily checked. We shall have need of the following fact.

\begin{lemma}\label{lem:Disom} \textup(\cite{hal73} and see \cite[Ex.~1.4.11]{hig}.\textup) Let $S$ be
any semigroup and suppose that $e,f\in E(S)$ with $e\DD f$.
Then the subsemigroups $eSe$ and $fSf$ of $S$ are isomorphic. 
\end{lemma}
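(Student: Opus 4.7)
The plan is to build an explicit pair of mutually inverse homomorphisms $\phi\colon eSe\to fSf$ and $\psi\colon fSf \to eSe$, realised as conjugation by a suitable mutually inverse pair $(a,a')$ linking $e$ and $f$. First, since $e\DD f$ I would pick $c$ with $e\RR c\LL f$, so that $ec=c$ and $cf=c$. Being $\RR$-related to the idempotent $e$, the element $c$ is regular, so I select $c'\in V(c)$; then $cc'\in R_e$ and $c'c\in L_f$ are both idempotents.

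Second, I would record the standard elementary fact that for any two $\RR$-related idempotents $g,h$ one has $gh=h$ and $hg=g$ (indeed, writing $g=hu$ gives $hg=h\cdot hu=hu=g$, and dually), and symmetrically $gh=g$, $hg=h$ for $\LL$-related idempotents. Applied to $cc',e\in R_e$ this yields $cc'\cdot e=e$, while applied to $c'c,f\in L_f$ it yields $f\cdot c'c=f$. Setting $a:=c$ and $a':=fc'e$, one computes
\begin{equation*}
aa'=c\cdot fc'e=(cf)c'e=cc'\cdot e=e,\qquad a'a=fc'e\cdot c=fc'\cdot ec=f\cdot c'c=f.
\end{equation*}
Moreover $ea=a=af$ and $fa'=a'=a'e$ are immediate from the constructions of $a$ and $a'$.

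Third, I define $\phi(x):=a'xa$ for $x\in eSe$ and $\psi(y):=aya'$ for $y\in fSf$. That $\phi$ lands in $fSf$ and $\psi$ in $eSe$ follows from $fa'=a'=a'e$ and $ea=a=af$. The pair $(\phi,\psi)$ is mutually inverse: for $x\in eSe$,
\begin{equation*}
\psi(\phi(x))=a\cdot a'xa\cdot a'=(aa')\,x\,(aa')=exe=x,
\end{equation*}
and dually $\phi(\psi(y))=y$. Finally, $\phi$ is a homomorphism because, for $x,y\in eSe$,
\begin{equation*}
\phi(x)\phi(y)=a'xa\cdot a'ya=a'x(aa')ya=a'xeya=a'xya=\phi(xy),
\end{equation*}
using $xe=x$; and similarly $\psi$ preserves multiplication.

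The main obstacle is the second step: producing from an arbitrary $c\in R_e\cap L_f$ a mutually inverse pair $(a,a')$ that realises both identities $aa'=e$ and $a'a=f$ on the nose (not merely idempotents related by $\RR$ or $\LL$ to $e$ and $f$). The small combinatorial identities for $\RR$- and $\LL$-related idempotents are exactly what pin down the ``correct'' inverses, after which the isomorphism is an unadorned conjugation calculation.
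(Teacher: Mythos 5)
Your proof is correct and complete: the conjugation maps $x\mapsto a'xa$ and $y\mapsto aya'$ built from a mutually inverse pair with $aa'=e$, $a'a=f$ constitute exactly the standard argument for this lemma, which the paper itself does not prove but only cites (Hall 1973; Higgins, Ex.~1.4.11). The auxiliary identities you use ($ec=c$, $cf=c$, and the product rules for $\RR$- and $\LL$-related idempotents) are all verified correctly, so nothing is missing.
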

For a class of regular semigroups $\mathcal{C}$, let $\mathcal{C}^{loc}$
denote the class of all regular semigroups $S$, the local subsemigroups
of which lie in $\mathcal{C}.$ Theorem~\ref{thm:local} below shows that if $\mathcal{C}$
is an $\{\mathsf{E},\mathsf{H},\mathsf{P}\}$-class of regular semigroups then the corresponding
localised class, $\mathcal{C}^{loc}$ is likewise an $\{\mathsf{E},\mathsf{H},\mathsf{P}\}$-class
consisting of regular semigroups and an equational basis for the latter
may be derived, in a systematic fashion, from any basis of the former.
This corresponds to Lemma~4.6.1 of~\cite{hal89}. 

In general, the class $\mathcal{C}'$ of all semigroups $S$ (whether
regular or not) such that $eSe\in\mathcal{C}$ $(e\in E(S))$ is not
closed under $\mathsf{H}$ for by default every free semigroup lies in $\mathcal{C}'$
and so $\mathsf{H}(\mathcal{C}')$ is the class $\mathcal{S}$ of all semigroups.
Hence $\mathcal{C}'$ is closed under $\mathsf{H}$ if and only if for all
semigroups $S$ and $e\in E(S)$ we have $eSe\in\mathcal{C}$. This implies
$\mathcal{C}\supseteq\mathcal{M}$, the class of all monoids. Since $eSe$
is a monoid with identity element $e$, the converse is also true.
It follows that $\mathcal{C}'$ is an $\{\mathsf{E},\mathsf{H},\mathsf{P}\}$-class if and only
if $\mathcal{M}\subseteq\mathcal{C}$, in which case $\mathcal{C}'=\mathcal{S}$.

We now consider a typical $\{\mathsf{E},\mathsf{H},\mathsf{P}\}$-class $\mathcal{C}$ consisting
of regular semigroups. Let $\mathcal{\mathcal{E}=E}(\mathcal{C})$ denote a
set of equation systems that defines $\mathcal{C}$.  Let $\mathcal{E}^{loc}$ be
the set of equation systems derived from $\mathcal{E}$ as follows.  Fix symbols $A$
and $X$ that do not occur in the sentences of $\mathcal{E}$.  For any letter $p$ appearing in $\mathcal{E}$, let $p'$ denote $AXpAX$, and extend this to words $w=p_1\dots p_k$ in the alphabet of $\mathcal{E}$ by letting $w'$ denote $p_1'\dots p_k'$, and then to equation systems $\varepsilon$ by applying $'$ to the words in each equality within $\varepsilon$.  An equation system 
\(
\varepsilon\colon (Q_1 \dots) \dots (Q_k \dots)\colon (u_1 = v_1)\wedge \dots \wedge (u_r = v_r)
\)
becomes
\begin{equation}
\varepsilon'\colon (\forall A)\,(\exists X\in V(A))\,(Q_1 \dots) \dots (Q_k \dots)\colon (u_1' = v_1')\wedge \dots \wedge (u_r' = v_r').\label{eqn:localreplace}
\end{equation}
To construct $\mathcal{E}^{loc}$ from $\mathcal{E}$, we replace each equation system $\varepsilon\in\mathcal{\mathcal{E}}$ by the equation system
$\varepsilon'$.
  %To construct $\mathcal{E}^{loc}$ from $\mathcal{E}$, we replace each equation system $\varepsilon\in\mathcal{\mathcal{E}}$ by the equation system
%$\varepsilon'$ as follows. 
%\begin{equation}
%\varepsilon'\colon ((\forall A)\,(\exists X)\colon (X\in V(A))\wedge\varepsilon_{1}\label{eqn:localreplace}
%\end{equation}
%where $\varepsilon_{1}$ is derived from $\varepsilon$ through replacing
%each instance of a symbol $t$ occurring in $\varepsilon$ by $AXtAX$.  
%We extend this notation to words and letters.  

\begin{theorem}\label{thm:local} Let $\mathcal{C=\mathcal{C}}(\mathcal{E})$ be an $\{\mathsf{E},\mathsf{H},\mathsf{P}\}$-class
consisting of regular semigroups. Then $\mathcal{C}^{loc}=\mathcal{C}(\mathcal{E}^{loc})$.
In particular, $\mathcal{C}^{loc}$ is also an $\{\mathsf{E},\mathsf{H},\mathsf{P}\}$-class of regular semigroups.
\end{theorem}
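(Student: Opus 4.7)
The plan is to exploit the observation that when $X$ is an inverse of $A$ in $S$, the product $AX$ is an idempotent, and the local subsemigroup $(AX)S(AX)$ is precisely $\{AXtAX : t\in S\}$. Consequently, the substitution $t\mapsto AXtAX$ will turn any equation system $\varepsilon\in\mathcal{E}$ on $S$ into a statement $\varepsilon_1$ on $S$ that is logically equivalent to ``$(AX)S(AX)$ satisfies $\varepsilon$''. I would establish this semantic correspondence first as a lemma (the images of the universally quantified variables under the substitution range exactly over $(AX)S(AX)$, as do the available existential witnesses, and the semigroup operations agree since $(AX)^2=AX$), and then use it to drive both inclusions of $\mathcal{C}^{loc}=\mathcal{C}(\mathcal{E}^{loc})$.

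For $\mathcal{C}^{loc}\subseteq \mathcal{C}(\mathcal{E}^{loc})$ I would take $S\in \mathcal{C}^{loc}$ and $\varepsilon\in\mathcal{E}$. Given $A\in S$, regularity supplies some $X\in V(A)$; setting $e:=AX\in E(S)$, the hypothesis gives $eSe\in\mathcal{C}$, so $eSe\models\varepsilon$. The correspondence then yields $S\models\varepsilon_1$ for this $A,X$ (any witness $y\in eSe$ for $\varepsilon$ is simultaneously a witness $x:=y$ in $S$ for $\varepsilon_1$, since $AXyAX=y$ whenever $y\in eSe$), and hence $S\models\varepsilon'$.

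For the reverse inclusion I would suppose $S\models\mathcal{E}^{loc}$. The conjunct $X\in V(A)$ immediately delivers regularity of $S$. To show $eSe\in\mathcal{C}$ for a given $e\in E(S)$, I would apply each $\varepsilon'$ with $A:=e$: the existential quantifier produces some $X\in V(e)$, and the correspondence above delivers $fSf\models\varepsilon$ where $f:=eX\in E(S)$. Noting that $ef=eeX=eX=f$ and $fe=eXe=e$, one has $e\RR f$, so $e\DD f$, and Lemma~\ref{lem:Disom} gives $eSe\cong fSf$; since $\varepsilon$ is first-order it transfers across the isomorphism to give $eSe\models\varepsilon$, whence $eSe\in\mathcal{C}$. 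The ``in particular'' clause then follows immediately from Theorem~\ref{thm:equationsystem}, because $\mathcal{C}^{loc}=\mathcal{C}(\mathcal{E}^{loc})$ is by construction the model class of a family of equation systems.

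The main obstacle will be the asymmetry in the reverse direction: the existential quantifier on $X$ in $\varepsilon'$ prevents one from forcing $X=e$ (even though $e\in V(e)$ would be a valid choice), so $\varepsilon$ is only directly obtained on $fSf$ rather than on $eSe$ itself. Lemma~\ref{lem:Disom} is precisely what closes this gap, via the $\RR$-relation $e\RR f$. A secondary bookkeeping task is to confirm that $\varepsilon'$ is a bona fide equation system: the extra conjunct $X\in V(A)$ is itself a conjunction of two equations that can be absorbed into the matrix of $\varepsilon_1$ without disturbing the prenex prefix.
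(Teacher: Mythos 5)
Your proposal is correct and takes essentially the same route as the paper's proof: the substitution $t\mapsto AXtAX$ with $X\in V(A)$ is read as satisfaction of $\varepsilon$ in the local subsemigroup $(AX)S(AX)$, giving one inclusion directly from $eSe\in\mathcal{C}$, and the reverse inclusion is closed exactly as in the paper by noting $e\RR f$ for $f=eX$ with $X\in V(e)$ and invoking Lemma~\ref{lem:Disom} to transfer $\varepsilon$ from $fSf$ to $eSe$.
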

\begin{proof} 
Consider an equation system $\varepsilon$ from $\mathcal{E}$:
\[
(Q_1 \dots) \dots (Q_k \dots)\colon (u_1 = v_1)\wedge \dots \wedge (u_r = v_r).
\]
So $\varepsilon'$ is 
\[
(\forall A)\,(\exists X)\colon (X\in V(A))\wedge (Q_1 \dots) \dots (Q_k \dots)\colon (u_1' = v_1')\wedge \dots \wedge (u_r' = v_r').
\]
Suppose that $S\in\mathcal{C}^{loc}$. Let $A\in S$ and $X\in V(A)$ be arbitrary, and set $e=AX\in E(S)$.  As $eSe$ satisfies $\varepsilon$, and as each $p\in eSe$ satisfies $p=epe=AXpAX$, it follows immediately that $S$ satisfies $\varepsilon'$.  

Conversely, suppose that $S\in \mathcal{C}(\mathcal{E}^{loc})$ and consider any $\varepsilon\in\mathcal{E}$; we are required to show that $eSe$ satisfies $\varepsilon$, for any choice $e\in E(S)$.  Let $e\in E(S)$ be arbitrary, and note that $S$ satisfies $\varepsilon'$.  Take $A=e$ and let $X\in V(A)$ be the corresponding element of $S$ that exists due to satisfaction of $\varepsilon'$.  Now $f=AX$ is idempotent and each element $p\in fSf$ satisfies $p=fpf=AXpAX$ so that $fSf$ also satisfies $\varepsilon$.  But $e\RR f$, so $e\DD f$, giving $eSe\cong fSf$ by Lemma~\ref{lem:Disom}, showing that $eSe$ satisfies $\varepsilon$, as required.
\end{proof}

\section{Universally Satisfied Equations}\label{sec:universal}

Given an $\{\mathsf{E},\mathsf{H},\mathsf{P}\}$-class $\mathcal{C}$ there are two natural tasks arising.
The first is the determination of an equational basis for $\mathcal{C}$,
which was the subject of Sections 4 and 5. In this section we examine the
other side of the coin, which is the question of finding all equations
satisfied by $\mathcal{C}$. Here we shall solve the latter problem for
one class only, that being the class generated by $P=(\mathbb{Z}^{+},+)$
and for equations of the type $\forall\dots\exists$. As a corollary
we obtain a description of the class of equations without parameters
solvable in every semigroup. 

We shall denote a typical semigroup equation as $e:p=q$ where $p,q\in F_{A\cup X}$,
the free semigroup on $A\cup X$, where $A$ and $X$ are disjoint
countably infinite sets. Elements of $A$ will follow instances of
the $\forall$ quantifier while those drawn from $X$ will follow
the $\exists$ symbol. We shall denote the number of instances of
the letter $y\in A\cup X$ in a word $w\in F_{A\cup X}$ by $|w|_{y}$,
with the length of $w$ simply denoted by $|w|$. Define the \emph{content
}of $w$ as the set 
\[
c(w)=\{y\in X:|w|_{y}\geq1\}.
\]

\begin{definition}  An equation $e\colon p=q$ $(p,q\in F_{A\cup X})$
is \emph{semigroup universal} if $e$ is solvable in every semigroup $S$. 
\end{definition}
In this section we adopt the abbreviation that an equation is \emph{universal} if it is a semigroup-universal equation.  This is is not to be confused with ``universally quantified equation'', which in this article is referred to as an ``identity''.

We will make use of elementary results on subsemigroups of $P$. Our
source here is Chapter 2, Section 4 of the book by Grillet \cite{gri}
who therein gives the original sources of these and other related
facts on numerical semigroups.

\begin{theorem}[Proposition II.4.1 and  Corollary 4.2 of \cite{gri}]\label{thm:gri}

\begin{enumerate}

\item[(i)] Let $S$ be a subsemigroup of $P=(\mathbb{\mathbb{Z}}^{+},+)$.
Then there exists a unique integer $d\geq1$ such that $S$ consists
of multiples of $d$ and $S$ contains all sufficiently large multiples
of $d$. 

\item[(ii)] A subsemigroup $S$ of $(\mathbb{Z},+)$ either contains only
non-negative integers, or only non-positive integers, or is a subgroup
of $(\mathbb{Z},+)$. In the latter case, $S=d\mathbb{Z}$ for some
$d\geq0$. 
\end{enumerate}
\end{theorem}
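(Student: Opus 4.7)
The plan is to prove the two parts in turn; part (i) reduces to the classical Sylvester--Frobenius theorem on numerical semigroups, while part (ii) is a case analysis finished by the division algorithm.

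For part (i), I would set $d=\gcd(S)$. Clearly every element of $S$ is a multiple of $d$ and $d$ is uniquely determined, so the substantive claim is that $S$ contains all sufficiently large multiples of~$d$. A standard gcd argument produces a finite subset $\{a_1,\dots,a_k\}\subseteq S$ with $\gcd(a_1,\dots,a_k)=d$; dividing through by $d$, I may assume $d=1$. By B\'ezout there exist integers $\lambda_1,\dots,\lambda_k$ with $\sum\lambda_i a_i=1$; the positive-coefficient sum $P=\sum_{\lambda_i>0}\lambda_i a_i$ and the negative-coefficient sum $N=\sum_{\lambda_i<0}|\lambda_i|a_i$ are both elements of $S$ (as sums of positive multiples of the $a_i$), and they satisfy $P-N=1$, so $\gcd(P,N)=1$ (the degenerate case where all $\lambda_i$ share a sign forces $1\in S$ and there is nothing more to prove). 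I would then invoke the Sylvester--Frobenius theorem: for coprime $P,N\in\mathbb{Z}^+$, every integer $n\geq(P-1)(N-1)$ is of the form $mP+\ell N$ with $m,\ell\in\mathbb{Z}_{\geq 0}$. Such a combination is a sum in the additive subsemigroup generated by $\{P,N\}\subseteq S$, so it lies in $S$ (the corner case $m=\ell=0$ only gives $n=0\notin\mathbb{Z}^+$). This establishes the conclusion.

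For part (ii), I would work by cases on sign. If $S$ has no positive element, or no negative element, the first two alternatives of the statement hold. Otherwise pick $a\in S$ with $a>0$ and $c\in S$ with $c<0$; then $(-c)\cdot a+a\cdot c=0$ lies in $S$ as a sum of $-c\geq 1$ copies of $a$ and $a\geq 1$ copies of $c$. Let $d>0$ be the smallest positive element of $S$ and let $-e<0$ be the largest negative element, both existing by well-ordering. If $d>e$ then $d+(-e)=d-e$ is a positive element of $S$ strictly smaller than $d$, contradicting minimality; dually if $d<e$. Hence $d=e$ and $\pm d\in S$, whence $d\mathbb{Z}\subseteq S$. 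For the reverse inclusion, given $x\in S$ with $x>0$, write $x=qd+r$ with $0\leq r<d$ by the division algorithm; then $r=x+q(-d)\in S$, so $r=0$ by minimality of $d$. A symmetric argument handles $x<0$. Hence $S=d\mathbb{Z}$, completing the proof.

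The main obstacle is soft: the crux of part (i) is invoking Sylvester--Frobenius after a small B\'ezout manipulation, and part (ii) is a transparent application of the division algorithm. Both arguments are standard elementary number theory, so I expect the write-up to be short; the only judgment call is how much of Sylvester--Frobenius to quote versus reprove, given that it is already on record in \cite{gri}.
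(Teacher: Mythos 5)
Your proof is correct. Note, however, that the paper does not prove this statement at all: it is quoted verbatim as Proposition II.4.1 and Corollary 4.2 of Grillet \cite{gri}, so there is no in-paper argument to compare yours against; your route (take $d=\gcd(S)$, reduce to $d=1$, produce coprime $P,N\in S$ via B\'ezout and invoke Sylvester--Frobenius for (i); division algorithm after locating $\pm d\in S$ for (ii)) is the standard elementary one and all the corner cases you flag (all $\lambda_i$ of one sign, $m=\ell=0$, the sign analysis in (ii)) are handled correctly. The only gloss is that uniqueness of $d$ in (i) is asserted rather than argued, but it is immediate: any competing $d'$ divides every element of $S$ and hence divides $d=\gcd(S)$, while $d$ divides the two consecutive large multiples $kd'$, $(k+1)d'$ of $d'$ lying in $S$, so $d\mid d'$ as well -- this is exactly the style of argument the paper itself uses in proving its Corollary \ref{cor:gri}.
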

W shall adopt the convention that gcd$(0,0,\cdots, 0)=0$.
\begin{corollary} \label{cor:gri}
Let $m_{1},\dots,m_{n}\in\mathbb{Z}$ with
gcd$(m_{1},\dots,m_{n})=d$. Define
\[
S=S(m_{1},\dots,m_{n})=\{t_{1}m_{1}+\dots+t_{m}m_{n}:t_{i}\geq1\ (1\leq i\leq n)\}.
\]
 Then $S$ is a subsemigroup of $(\mathbb{Z},+)$. Moreover if all
the integers $m_{i}$ are positive then $S$ is a subsemigroup of
$P=(\mathbb{Z}^{+},+)$ and further $d$ is the unique integer such
that both the conditions that $S\subseteq d\mathbb{Z}^{+}$ and there exists $k\in\mathbb{Z}^{+}$
such that $\{da:a\geq k\}\subseteq S$ are satisfied.
\end{corollary}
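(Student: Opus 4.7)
The plan is to verify closure of $S$ directly from the definition, and then to identify $d=\gcd(m_1,\dots,m_n)$ with the distinguished integer supplied by Theorem~\ref{thm:gri}(i).

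First I would check that $S$ is closed under addition. Given $u=\sum_{i=1}^n t_i m_i$ and $v=\sum_{i=1}^n s_i m_i$ in $S$ with all $t_i,s_i\geq 1$, we have $u+v=\sum_{i=1}^n (t_i+s_i)m_i$ with each $t_i+s_i\geq 2\geq 1$, hence $u+v\in S$. This gives the first claim that $S$ is a subsemigroup of $(\mathbb{Z},+)$. If furthermore each $m_i>0$, then any positive integer combination $\sum t_im_i$ with $t_i\geq 1$ is a strictly positive integer, so $S\subseteq\mathbb{Z}^+=P$ and $S$ is a subsemigroup of $P$.

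Next I would invoke Theorem~\ref{thm:gri}(i) applied to $S$ to obtain the unique integer $d^*\geq 1$ such that $S\subseteq d^*\mathbb{Z}^+$ and $\{d^*a:a\geq k\}\subseteq S$ for some $k\in\mathbb{Z}^+$. The rest of the corollary will follow once I show $d^*=d$, because then $d$ satisfies both of the stated conditions, and the uniqueness of $d$ with respect to those conditions is inherited from the uniqueness clause in Theorem~\ref{thm:gri}(i).

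To prove $d^*\mid d$, I would use the witnesses $M:=m_1+\cdots+m_n$ and $M+m_i=m_1+\cdots+2m_i+\cdots+m_n$, which lie in $S$ by taking all $t_j=1$ in the first case and $t_i=2$, $t_j=1$ for $j\neq i$ in the second. Both elements are multiples of $d^*$, so their integer difference $m_i$ is also a multiple of $d^*$; since this holds for every $i$ we conclude $d^*\mid\gcd(m_1,\dots,m_n)=d$. Conversely, $d\mid m_i$ for each $i$ means $d$ divides every element of $S$ (since each is a positive integer combination of the $m_i$); applying this to both $d^*k$ and $d^*(k+1)$ (which are in $S$ for all sufficiently large $k$ by the second property of $d^*$) yields $d\mid d^*(k+1)-d^*k=d^*$. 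Hence $d=d^*$. There is no real obstacle in this argument; the only thing requiring a touch of care is choosing the pair $M$ and $M+m_i$ in $S$ whose difference isolates a single generator, and the rest is routine divisibility bookkeeping.
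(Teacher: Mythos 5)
Your proposal is correct and follows essentially the same route as the paper: both obtain the distinguished integer from Theorem~\ref{thm:gri}(i), prove it divides each $m_i$ by subtracting the two elements $m_1+\cdots+m_n$ and $m_1+\cdots+2m_i+\cdots+m_n$ of $S$, and prove the reverse divisibility by applying $d\mid s$ for all $s\in S$ to two consecutive large multiples. The only difference is that you spell out the closure of $S$ under addition, which the paper dismisses as clear.
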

\begin{proof} Clearly $S$ is a subsemigroup of $(\mathbb{Z},+)$. In the case
where all the $m_{i}\geq1$, clearly $S \subseteq P$ and by Theorem \ref{thm:gri}(i), there exists a unique
positive integer $d_{1}$ that has both the properties that $S\subseteq d_{1}\mathbb{Z}^{+}$
and there exists a positive integer $k$ such that $d_{1}a\in S$
for all $a\geq k$ . On the other hand $d|s$ for all $s\in S$ so
that $S\subseteq d\mathbb{Z}^{+}$. It follows that $d|kd_{1}$ and
$d|(k+1)d_{1}$, whence $d$ is a factor of their difference, and
so $d|d_{1}$. On the other hand, for any $1\leq i\leq n$ we may
write $m_{1}+\dots+m_{n}=pd_{1}$ and $m_{1}+\dots+2m_{i}+\dots +m_{n}=qd_{1}$
for some $p<q\in\mathbb{Z}^{+}$. Then we have $m_{i}=(q-p)d_{1}$
and so $d_{1}|m_{i}$ for all $1\leq i\leq n$. Therefore $d_{1}|d=\text{gcd\ensuremath{(m_{1},\dots,m_{n}})}$.
We conclude that $d_{1}=d$. 
\end{proof}

\begin{definition}\label{def:gcd} Let $e$ be of the form $(\forall a_{1},\dots,a_{k})\,(\exists x_{1},\dots,x_{l}):p=q$.
Let $r_{i}=|p|_{x_{i}}$, $s_{i}=|q|_{x_{i}},\,p_{j}=|p|_{a_{j}},q_{j}=|q|_{a_{j}}$.
We shall write $r_{i}-s_{i}$ as $m_{i}$ and $q_{j}-p_{j}$ as $n_{j}$;
let $d$ stand for gcd$(m_{1},\dots,m_{l})$ and $d'$ for gcd$(n_{1},\dots,n_{k})$. 
\end{definition}
\begin{theorem}\label{thm:gcd}
 Let $e\colon p=q$ be an equation written in the notation
of Definition \ref{def:gcd}. Then $e\colon p=q$ is solvable in $P$ if and only if for any
given positive integers $a_{i}$ $(1\leq i\leq k)$ there exist positive
integers $t_{i}$ $(1\leq i\leq l)$ \textup(depending on the $a_{i}$\textup) such
that:
\[
\sum_{i=1}^{l}t_{i}m_{i}=\sum_{i=i}^{k}a_{i}n_{i},
\]
which is equivalent to the statement that $S(n_{1},\dots,n_{k})\subseteq S(m_{1},\dots,m_{l})$. 
\end{theorem}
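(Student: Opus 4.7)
The result is essentially the translation of equality in the commutative, cancellative semigroup $P=(\mathbb{Z}^+,+)$ into integer arithmetic. First I would observe that for any choice of positive integer values $a_1,\dots,a_k$ for the parameters and $t_1,\dots,t_l$ for the variables, the word $p$ evaluates in $P$ to
\[
\sum_{i=1}^{l} r_i t_i + \sum_{j=1}^{k} p_j a_j,
\]
because $P$ is commutative and addition in $P$ only counts the multiplicities of the letters occurring in $p$; likewise $q$ evaluates to $\sum_{i=1}^{l} s_i t_i + \sum_{j=1}^{k} q_j a_j$. Equality of these two values in $P$ is nothing more than equality of positive integers in $\mathbb{Z}$, which after rearrangement and substitution of $m_i = r_i - s_i$ and $n_j = q_j - p_j$ becomes
\[
\sum_{i=1}^{l} m_i t_i = \sum_{j=1}^{k} n_j a_j. \quad (*)
\]
Hence solvability of $e$ in $P$ for a fixed tuple $(a_1,\dots,a_k)$ is exactly the existence of positive integers $t_1,\dots,t_l$ satisfying $(*)$; universal solvability in $P$ is therefore precisely the statement of the first equivalence in the theorem.

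For the second equivalence I would then unpack the definitions. As $(t_1,\dots,t_l)$ ranges over $(\mathbb{Z}^+)^l$ the left-hand side of $(*)$ ranges exactly over $S(m_1,\dots,m_l)$, and as $(a_1,\dots,a_k)$ ranges over $(\mathbb{Z}^+)^k$ the right-hand side ranges over $S(n_1,\dots,n_k)$. The universal-existential condition that every choice of right-hand side is matched by some left-hand side is therefore literally the containment $S(n_1,\dots,n_k)\subseteq S(m_1,\dots,m_l)$, as required.

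There is no real obstacle here: the whole content is the translation to integer arithmetic. The only point that warrants a moment's care is that the individual sums in $(*)$ can be zero or negative when some $m_i$ or $n_j$ is negative, so $(*)$ must be read as an identity in $\mathbb{Z}$ rather than in $P$. This is legitimate since $(*)$ was obtained from a genuine equality of positive integers by transposition of the $s_i t_i$ and $p_j a_j$ terms, and the converse transposition recovers the original equality in $P$.
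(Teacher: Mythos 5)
Your proposal is correct and follows essentially the same route as the paper: translate the evaluated equation into additive form, rearrange to $\sum_{i} m_i t_i = \sum_{j} n_j a_j$, and read off the equivalence with the containment $S(n_1,\dots,n_k)\subseteq S(m_1,\dots,m_l)$. Your remark that the rearranged identity lives in $\mathbb{Z}$ rather than in $P$ matches the paper's own caveat that the $m_i$ and $n_j$ may be negative.
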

\begin{proof} In $P$, after a substitution $x_{i}\rightarrow t_{i}$,
our equation $p=q$ takes on the form:
\[
(p_{1}a_{1}+\dots+p_{k}a_{k})+(r_{1}t_{1}+\dots+r_{l}t_{l})=(q_{1}a_{1}+\dots+q_{k}a_{k})+(s_{1}t_{1}+\dots+s_{l}t_{l})
\]
\[
\Leftrightarrow\sum_{i=1}^{k}(p_{i}-q_{i})a_{i}+\sum_{i=1}^{l}(r_{i}-s_{i})t_{i}=0
\]
\[
\Leftrightarrow\sum_{i=1}^{l}t_{i}m_{i}=\sum_{i=i}^{k}a_{i}n_{i}.
\]
The integers $r_{i}-s_{i}=m_{i}$ and $q_{i}-p_{i}=n_{i}$, which
are fixed and may be negative, are determined by the equation $e$.
It follows that $e:p=q$ will be solvable in $P$ if and only if every
linear combination of the $n_{i}$ in positive integers is a linear
combination in positive integers of the $m_{i}$, which is the statement of the theorem. 
\end{proof}

\begin{theorem}\label{thm:solvableinP} Let $e\colon p=q$ be an equation written in the notation
of Definition \ref{def:gcd}. Let $d=$ gcd$(m_{1},\dots,m_{l})$ and $d'=$
gcd$(n_{1},\dots,n_{k})$. Suppose for some $x,y\in c(pq)$, $|p|_{x}<|q|_{x}$
and $|p|_{y}>|q|_{y}$. Then $e\colon p=q$ is solvable in $P$ if and only
if $d|d'$. 
\end{theorem}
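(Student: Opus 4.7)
The plan is to invoke Theorem~\ref{thm:gcd} to reduce the problem to showing that $S_n:=S(n_1,\dots,n_k)$ is contained in $S_m:=S(m_1,\dots,m_l)$ if and only if $d\mid d'$. The overall strategy is first to exploit the mixed-sign hypothesis on the $m_i$'s to identify $S_m$ explicitly as $d\mathbb{Z}$ via Theorem~\ref{thm:gri}(ii), and then to translate the resulting containment $S_n\subseteq d\mathbb{Z}$ into the divisibility statement $d\mid d'$.

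The hypothesis supplies two distinct existential letters $x=x_i$ and $y=x_j$ (necessarily distinct, since $|p|_x<|q|_x$ and $|p|_y>|q|_y$ cannot coexist for a single letter) with $m_i<0<m_j$. Choosing the coefficient $t_i$ arbitrarily large with all other $t$'s equal to $1$ yields arbitrarily negative elements of $S_m$, and the symmetric choice yields arbitrarily positive ones. Therefore $S_m$ meets both the positive and the negative integers, and Theorem~\ref{thm:gri}(ii) forces $S_m=d_1\mathbb{Z}$ for some $d_1\geq 1$. On the one hand, $d\mid m_i$ for every $i$ implies $S_m\subseteq d\mathbb{Z}$ and hence $d\mid d_1$. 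On the other, each $m_i$ appears as the difference $\bigl((m_1+\dots+m_l)+m_i\bigr)-(m_1+\dots+m_l)$ of two elements of $S_m$; once $S_m$ has been identified as a group, this places $m_i\in S_m=d_1\mathbb{Z}$, forcing $d_1\mid d$. Hence $S_m=d\mathbb{Z}$.

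With $S_m=d\mathbb{Z}$ in hand, the remaining equivalence is routine arithmetic. If $d\mid d'$, then $d$ divides each $n_j$ and so every positive-integer combination of the $n_j$'s, which places $S_n\subseteq d\mathbb{Z}$. Conversely, if $S_n\subseteq d\mathbb{Z}$, the analogous doubling trick exhibits each $n_j$ as a difference of two elements of $S_n\subseteq d\mathbb{Z}$, and since $d\mathbb{Z}$ is itself a group we conclude $d\mid n_j$ for every $j$, whence $d\mid d'$. The only delicate point I anticipate is the careful separation of the subsemigroup $S_m$ from the subgroup it spans: differences of two elements of $S_m$ return to $S_m$ only after Theorem~\ref{thm:gri}(ii) has been invoked, so the logical order of the argument matters. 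The case $k=0$ is harmless, as then $d'=0$ by convention, $d\mid d'$ holds automatically, and the corresponding equation is trivially solvable in $P$.
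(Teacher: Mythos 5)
Your proposal is correct, and the first half coincides with the paper's: both arguments use the mixed-sign hypothesis together with Theorem~\ref{thm:gri}(ii) to identify $S(m_1,\dots,m_l)$ as a subgroup $d_1\mathbb{Z}$, then pin down $d_1=d$ by the same doubling trick $m_i=\bigl((m_1+\dots+m_l)+m_i\bigr)-(m_1+\dots+m_l)$ that appears in the proof of Corollary~\ref{cor:gri}, and both derive the forward implication from $S(n_1,\dots,n_k)\subseteq d'\mathbb{Z}\subseteq d\mathbb{Z}=S(m_1,\dots,m_l)$. Where you genuinely diverge is the converse. The paper appeals to Corollary~\ref{cor:gri} to locate a set of the form $\pm\{ad':a\geq k\}$ inside $S(n_1,\dots,n_k)$, selects a prime $p\geq k+d$, and deduces $d\mid d'$ from $d\mid pd'$ together with $\gcd(d,p)=1$. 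You instead apply the doubling trick a second time, on the $n_j$'s: each $n_j$ is a difference of two elements of $S(n_1,\dots,n_k)\subseteq d\mathbb{Z}$, and since $d\mathbb{Z}$ is closed under subtraction this gives $d\mid n_j$ for every $j$, hence $d\mid d'$. Your route is more elementary (no primes, no appeal to the asymptotic structure of numerical semigroups) and it sidesteps the sign case analysis implicit in the paper's ``$\pm$'' and its reuse of Corollary~\ref{cor:gri} outside the all-positive setting in which that corollary is stated; it also handles $d'=0$ and the parameter-free case $k=0$ uniformly, as you note. The one point worth stating explicitly if you write this up is that the passage from ``$e$ is solvable in $P$'' to ``$S(n_1,\dots,n_k)\subseteq S(m_1,\dots,m_l)$'' is exactly the content of Theorem~\ref{thm:gcd}, which you invoke at the outset.
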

\begin{proof} By hypothesis, some of the integers $m_{i}$ are positive
and some are negative, from which it follows from Theorem \ref{thm:gri}(ii)
that $S_{1}:=S(m_{1},\dots,m_{l})=d_{1}\text{\ensuremath{\mathbb{Z}} for some \ensuremath{d_{1}\geq1.} 
Since \ensuremath{d_{1}\in S_{1}} it follows that \ensuremath{d|d_{1}.}}$
On the other hand, by the argument in the proof of Corollary \ref{cor:gri},
$d_{1}|d$ also and therefore $d_{1}=d$ and so $S_{1}=d\mathbb{Z}$. 

For $d'\neq0$, $d'|n_{i}$ for all $1\leq i\leq k$ and
it follows that $S(n_{1},\dots,n_{k})\subseteq d'\mathbb{Z}$.
On the other hand $d'=0$ if and only if $n_i = 0$  for all $1\leq i \leq k$ in which case $S(n_1, \cdots, n_k)={0}= d'\mathbb{Z}$.   

Now suppose that $d|d'$ so that $d'=dy$ say. Then 
\[
S(n_{1},\dots,n_{k})\subseteq d'\mathbb{Z}=dy\mathbb{Z}\subseteq d\mathbb{Z}=S(m_{1},\dots,m_{l}),
\]
and so by Theorem \ref{thm:gcd}, $e$ is solvable in $P$.

 Conversely suppose
that $e$ may be solved in $P$. By the argument of the first paragraph of this proof and that of Corollary~\ref{cor:gri}, it follows that $S(n_{1},\dots,n_{k})$
contains a set of the form $\pm \{ad':a\geq k\}$ for some fixed positive
integer $k$. Take $p$ to be a prime with $p\geq k+d$. Then $pd'\in \pm S(n_{1},\dots,n_{k})$.
By Theorem \ref{thm:gcd}, $pd'\in S(m_{1},\dots,m_{l})=d\mathbb{Z}$. We then
have $d|pd'$. However $d$ and $p$ are relatively prime (as $p>d$)
and so $d|d'$, thus completing the proof.
\end{proof} 

\begin{corollary}  Let $e\colon p=q$ be an equation without parameters.
Let $c(pq)=\{x_{1},\dots,x_{n}\}$. Then $e$ is universal if and
only if either\up:
\begin{enumerate}
\item[(i)] $|p|_{x_{i}}=|q|_{x_{i}}$ for all $i=1,2,\dots,n$ or 
\item[(ii)] for some $x,y\in c(pq)$, $|p|_{x}<|q|_{x}$ and $|p|_{y}>|q|_{y}$. 
\end{enumerate}
\end{corollary}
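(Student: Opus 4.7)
The plan is to establish each direction separately, in both cases reducing to substitutions inside the monogenic subsemigroup generated by a single element of the ambient semigroup.

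For the forward direction, I would use that a universal equation must in particular be solvable in $P=(\mathbb{Z}^{+},+)$. With no parameters, a substitution $x_{i}\mapsto t_{i}\in\mathbb{Z}^{+}$ sends $p$ to $\sum_{i=1}^{l}r_{i}t_{i}$ and $q$ to $\sum_{i=1}^{l}s_{i}t_{i}$, where $r_{i}=|p|_{x_{i}}$ and $s_{i}=|q|_{x_{i}}$, so solvability in $P$ reduces to the existence of positive integers $t_{1},\dots,t_{l}$ with $\sum_{i=1}^{l}t_{i}m_{i}=0$, where $m_{i}=r_{i}-s_{i}=|p|_{x_{i}}-|q|_{x_{i}}$. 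If all $m_{i}$ vanish we land in case (i). Otherwise, positive coefficients $t_{i}$ cannot annihilate a linear combination whose nonzero $m_{i}$ share a common sign, so we must have some $m_{i}>0$ and some $m_{j}<0$, which is exactly case (ii).

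For the reverse direction, I would fix an arbitrary semigroup $S$ and an arbitrary element $s\in S$, and substitute each variable by a power of $s$ so that commutativity inside $\langle s\rangle$ collapses both sides to a single power. Under (i), the substitution $x_{i}\mapsto s$ for every $i$ sends $p$ to $s^{|p|}$ and $q$ to $s^{|q|}$, and these are equal because $|p|=\sum_{i}|p|_{x_{i}}=\sum_{i}|q|_{x_{i}}=|q|$. Under (ii), I would invoke Theorem~\ref{thm:gri}(ii) applied to the subsemigroup $S(m_{1},\dots,m_{l})$ of $(\mathbb{Z},+)$: since it contains integers of both signs, it is a subgroup of $\mathbb{Z}$, hence contains $0$. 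Thus there exist positive integers $t_{1},\dots,t_{l}$ with $\sum_{i}t_{i}m_{i}=0$, and the substitution $x_{i}\mapsto s^{t_{i}}$ then evaluates both $p$ and $q$ to the same power of $s$, as $\sum_{i}r_{i}t_{i}-\sum_{i}s_{i}t_{i}=\sum_{i}m_{i}t_{i}=0$.

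I do not anticipate a serious obstacle. The key observation is that once every variable is mapped to a power of one fixed element $s$, the evaluation takes place inside the commutative monogenic subsemigroup $\langle s\rangle$, where equality amounts to matching total exponents; this collapses the question of universal solvability to precisely the arithmetic criterion for $P$-solvability already handled in Theorem~\ref{thm:gcd} and Theorem~\ref{thm:solvableinP}, with the only subtlety being the passage from subsemigroup membership of $0$ in $S(m_{1},\dots,m_{l})$ to positive-integer witnesses.
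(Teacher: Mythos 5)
Your proposal is correct and follows essentially the same route as the paper: reduce universality to solvability in $P$ (equivalently, in the monogenic subsemigroup $\langle s\rangle$, where all substituted powers of $s$ commute), and then observe that the arithmetic condition $\sum_i t_i m_i=0$ with $t_i\geq 1$ forces either all $m_i=0$ or $m_i$ of both signs. The only (harmless) divergence is in case (ii) of the sufficiency: the paper adjoins a dummy parameter and invokes Theorem~\ref{thm:solvableinP}, whereas you apply Theorem~\ref{thm:gri}(ii) directly to $S(m_1,\dots,m_n)$ to get $0$ as a positive-integer combination, which is a slight streamlining of the same underlying fact.
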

\begin{proof} If (i) applies to $e$ then for any semigroup $S$
take $s\in S$ and substitute $x_{i}\rightarrow s$ $(1\leq i\leq n)$
in $e$; this yields $s^{|p|}=s^{|q|}$, which is true in $S$ as
$|p|=|q|$ and therefore $e$ is universal. Next suppose that (ii)
holds. Then $e$ is equivalent in $P$ to the equation $e':pa=qa$
where $a\in A$ is a parameter. Since $d'=0$, the condition $d|d'$
holds whence it follows from Theorem \ref{thm:solvableinP} that $e'$ is solvable in
$P$ and therefore $e$ is likewise. Now taking any $s\in S$ we have
$\langle s\rangle$ is a homomorphic image of $P$ and hence $e$
is solvable in $\langle s\rangle$. Since $e$ has no parameters,
any solution of $e$ in $\langle s\rangle$ is also a solution of
$e$ in $S$ and therefore $e$ is universal. 

Conversely suppose that neither conditions (i) nor (ii) hold for $e$.
Without loss we may assume that $|p|\leq|q|$. Suppose that for some
$i$ $(1\leq i\leq n)$ $|p|_{x_{i}}>|q|_{x_{i}}$. Then, since $|p|\leq|q|$
it would follow that for some other subscript $j$ we would find that
$|p|_{x_{j}}<|q|_{x_{j}}$, contradicting the assumption that condition
(ii) does not hold. Therefore $|p|_{x_{i}}\leq|q|_{x_{i}}$ for all
$1\leq i\leq n$. Moreover, since condition (i) does not hold either,
for at least one subscript $i$ the previous inequality is strict.
Any substitution $x_{i}\rightarrow t_{i}$ $(t_{i}\in\mathbb{Z}^{+})$
in $P$ therefore yields respective positive integers $p'$ and $q'$
say with $p'<q'$. In particular $p'\neq q'$ and so that $e$ cannot
be satisfied in $P$ and therefore $e$ is not universal. 
\end{proof}

\begin{example} Let us consider 
\[
e\colon x_{1}^{9}x_{2}^{23}a_{1}^{2}a_{2}^{13}a_{3}=x_{1}^{30}x_{2}^{8}a_{1}^{11}a_{2}^{7}a_{3}^{10}.
\]
In additive notation, which is more fitting for $P$,  our equation $e$ takes the form:
\[
e\colon  9x_1 + 23x_2 + 2a_1 + 13a_2 + a_3 = 30x_1 + 8x_2 + 11a_1 + 7a_2 +10a_3.
\]
Here $m_{1}=9-30=-21$, $m_{2}=23-8=15$, and $d=$ gcd($m_{1},m_{2})=3$;
$n_{1}=11-2=9,\,n_{2}=7-13=-6,\,n_{3}=10-1=9,$ and $d'=$ gcd$(9,-6,9)=3$.
Then $d=d'$ so $d|d'$ and by Theorem \ref{thm:solvableinP}, $e$ is solvable in $P$. Particular
selections for $a_{1},a_{2}$, and $a_{3}$ lead to solvable linear
diophantine equations in the respective positive integer multipliers $t_1$ and $t_2$ of  $m_1$ and $m_2$. 
\end{example}

\begin{example} The equation (again written additively):
\[
e\colon  13x + 24y + 2a + 5b = 10x + 16y + 13a + 19b
\]
is an instance in which each of the variables $(x$ and $y$) occurs
more often on the left than they do on the right so that Theorem \ref{thm:solvableinP}
does not apply. Nonetheless Theorem \ref{thm:gcd} shows us that $e$ is solvable
in $P$. We have $m_{1}=13-10=3,$$\,m_{2}=24-16=8$, $n_{1}=13-2=11,\,n_{2}=19-5=14$.
We note that $34=(6\times3)+(2\times8),\,35=(1\times3)+(4\times8)$,
$36=(4\times3)+(3\times8)$. It follows that 
\[
\{k\in\mathbb{Z}:k\geq34\}\subseteq\{3t_{1}+8t_{2}:t_{1},t_{2}\geq1\}= S(m_1, m_2).
\]
Now $\{11t_{1}+14t_{2}:t_{1},t_{2}\geq1\}\cap\{k\in\mathbb{\mathbb{Z}}:k\leq33\}=\{11+14=25\}$
and $(3\times3)+(2\times8)=25\in S(m_1, m_2)$ also. 
Hence $S(n_1, n_2) \subseteq S(m_1, m_2)$
and so $e$ is solvable in $P$. 

For a particular instance we put
$a=2$ and $b=3$ giving the diophantine equation for the multipliers $t_1$ and $t_2$:
\[
3t_{1}+8t_{2}=(2\times11)+(3\times14)=64;
\]
hence $2t_{2}\equiv1$ (mod $3)$ so $t_{2}=2+3t$, giving $3t_{1}+8(2+3t)=64$,
whence $3t_{1}=48-24t$ and so $t_{1}=16-8t$. Since $t_{1},t_{2}\geq1$,
there are two solutions given by $t=0,1$ which are respectively $t_{1}=16,$
$t_{2}=2$ and $t_{1}=8$, $t_{2}=5$. Substituting $x=16$ and $y=2$
yields a common value of $275$ for both sides of the equation $e$,
while putting $x=8$ and $y=5$ gives $243$ on each side.
\end{example}

\bf  Acknowledgement\normalfont: The authors would like to express their gratitude to the referee for their accurate and detailed comments and criticisms of the original draft of this paper.


\begin{thebibliography}{10}
\bibitem{bin} Bing, K., On arithmetical classes not closed under taking union, \emph{Proc. Amer. Math. Soc.} 6 (1955), 836--846.
\bibitem{bursan}Burris, S. and Sankappanavar, H.P., \emph{A Course in
Universal Algebra}, Springer-Verlag, 1981.
%1


\bibitem{chakei} Chang, C.C. and Keisler, H.J., \emph{Model Theory}, Studies in Logic and the Foundations of Mathematics Vol. 73, North Holland, 3rd ed., 1990.

\bibitem{clipre}Clifford, A.H. and Preston, G.B., \emph{The algebraic
theory of semigroups}, Vol. 1, Amer. Math. Soc. (1961), Vol. II, (1967).
%2

\bibitem{fitz}Fitzgerald, D.G., \emph{On inverses of products of idempotents in regular semigroups}, J. Austral. Math. Soc.,
 13 (1972), 335--337. 

\bibitem{gri}Grillet, P.A., \emph{Commutative semigroups}, Springer
Science + Business Media (2001).
%4

\bibitem{hal73}Hall, T.E., \emph{On regular semigroups}, J. Algebra,
Vol. 24(1), (1973), 1--24.
%5

\bibitem{hal89}Hall, T.E., \emph{Identities for existence varieties
of regular semigroups}, Bull. Austral. Math. Soc. 40 (1989), 59--77.
%6

\bibitem{hig}Higgins, P.M., \emph{Techniques of semigroup theory},
OUP (1992).
%7

\bibitem{hod}Hodges, W., \emph{A Shorter Model Theory}, Cambridge
University Press, 2002.
%8

\bibitem{how}Howie, J.M., \emph{Fundamentals of Semigroup Theory},
London Math. Soc. monographs, Clarendon Press, OUP (1995).
%9

\bibitem{kei} Keisler, H.J., Reduced products and Horn classes, \emph{Trans. Amer. Math. Soc.} 117 (1965), 307--328.

\bibitem{lal}Lallement, G., \emph{Semigroups and Combinatorial
Applications}, Wiley and Sons (1979).
%10

\bibitem{lyn}Lyndon, R., \emph{Properties preserved under homomorphism},
Pacific J. Math. 9 (1959), 143--54.
%11

\bibitem{nag} Nagy, A., \emph{Special Classes of Semigroups}, Springer (2001).
%12

\bibitem{ros} Rosen, E., \emph{Some aspects of model theory and finite structures}, Bull. Symb. Logic 8 (2002), 380--403.
%13
\end{thebibliography}
\end{document}